\def \tr {\text{\rm Tr}}
\def \Ent {\text{\rm Ent}}
\def \D {\, || \,}
\def \RR {\mathbb R}
\def \EE {\mathbb E}
\def \eps {\varepsilon}
\def \cE {\mathcal E}
\def \Tr {{ \rm Tr }}
\newtheorem{theorem}{Theorem}[section]
\newtheorem{lemma}[theorem]{Lemma}
\newtheorem{proposition}[theorem]{Proposition}
\newtheorem{corollary}[theorem]{Corollary}
\theoremstyle{definition}
\def\myffrac#1#2 in #3{\raise 2.6pt\hbox{$#3 #1$}\mkern-1.5mu\raise 0.8pt\hbox{$
		#3/$}\mkern-1.1mu\lower 1.5pt\hbox{$#3 #2$}}
\def\qed{\hfill $\vcenter{\hrule height .3mm
		\hbox {\vrule width .3mm height 2.1mm \kern 2mm \vrule width .3mm
			height 2.1mm} \hrule height .3mm}$ \bigskip}
\def \id {{\rm Id}}
\def \cov {{ \rm Cov}}
\def \deltaEPI {\delta_{\rm KL}}
\begin{document}

\title{Affirmative Resolution of Bourgain's Slicing Problem \\ using Guan's Bound}
\author{Boaz Klartag and Joseph Lehec}
\date{}
\maketitle

\abstract{We provide the final step in the 
resolution of Bourgain's slicing problem in the affirmative. Thus we establish the following theorem: 
for any convex body $K \subseteq \RR^n$ of volume one, there exists a hyperplane $H \subseteq \RR^n$
such that $$ Vol_{n-1}(K \cap H) > c,  $$ where $c > 0$ is a universal constant.
Our proof combines Milman's theory of $M$-ellipsoids, stochastic localization with a recent bound by Guan, 
and stability estimates for the Shannon-Stam  inequality by Eldan and Mikulincer. 
}

\section{Introduction}

Recently, a transformative paper by Qingyang Guan was posted on arXiv, providing a solution to Bourgain's slicing problem
 up to a factor of $\log \log n$. While Bourgain \cite{Bou1, Bou2} poses the question of whether $L_n \leq C$ for a universal constant $C > 0$
(see (\ref{eq_847}) below for the definition of $L_n$), it is shown in Guan \cite{guan} that 
$$ L_n \leq C \log \log n. $$
This improves upon the estimate $L_n \leq C \sqrt{\log n}$ proved in \cite{root_log}. For many years,
the best-known bounds were $L_n \leq C n^{1/4} \log n$ established in Bourgain \cite{B1, B2} and $L_n \leq C n^{1/4}$ from \cite{K_quarter}.
A breakthrough was achieved four years ago by Yuansi Chen \cite{chen}, 
who showed that $L_n \leq \exp( C \sqrt{ \log n \cdot \log \log n} )$. Chen's result was subsequently 
improved to $L_n \leq C \log^4 n$ in \cite{KL0}, then to $L_n \leq C \log^{2.223...} n$
in Jambulapati, Lee and Vempala \cite{JLV} and then to $L_n \leq C \log^{2.082...}$ and $L_n \leq C \sqrt{\log n}$, see \cite{root_log}.

\medskip 
As with all recent advances toward the slicing problem, Guan's work builds upon 
a technique named {\it stochastic localization} that was invented 
by Ronen Eldan in his Ph.D. thesis \cite{eldan, eldan_phd} and refined by Lee and Vempala \cite{LV} and others. In our context, this technique 
involves applying stochastic analysis in order to provide precise estimates 
related to the heat evolution of a probability measure in $\RR^n$, under convexity assumptions. 
A central estimate in Guan's paper is the bound in \cite[Lemma 2.1]{guan},
\begin{equation}  \EE \tr[A_t^2] \leq C n \qquad \qquad \qquad (t > 0), 
\label{eq_1749} \end{equation}
where $C > 0$ is a universal constant. See Section \ref{sec_1116} below for an explanation of this notation.
While Chen's work \cite{chen} relies on  manipulation of $3$-tensors using the log-concave Lichnerowicz inequality, Guan's proof of (\ref{eq_1749}) 
employs the  improved Lichnerowicz inequality from \cite{root_log} for analyzing these tensors. As it turns out,  the bound  (\ref{eq_1749}) provides the missing link 
in an approach to Bourgain's slicing problem 
discussed by the authors a few years ago.
We thus complete the proof of the following:

\begin{theorem} For any convex body $K \subseteq \RR^n$ of volume one, there exists a hyperplane $H \subseteq \RR^n$
such that $Vol_{n-1}(K \cap H) > c$. Here $c > 0$ is a universal constant.
\label{thm_1744_}
\end{theorem}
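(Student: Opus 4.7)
My approach is to establish the equivalent upper bound $L_n \leq C$ on the isotropic constant; by the classical reduction of the slicing problem to the isotropic constant (going back to Milman--Pajor and Ball), this immediately yields Theorem \ref{thm_1744_}. Writing $\mu_K$ for the uniform probability measure on $K$, placed in isotropic position, and letting $\gamma$ denote the Gaussian on $\RR^n$ matched to its covariance, the identity $\Ent(\mu_K \D \gamma) = (n/2) \log(2\pi e L_K^2)$ reduces the problem to showing that this relative entropy is linear in $n$ with a universal constant, i.e. every isotropic log-concave measure on $\RR^n$ sits at $O(1)$ KL divergence per coordinate from a matched Gaussian.

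To estimate this relative entropy, I would run Eldan's stochastic localization on $\mu_K$, producing a measure-valued martingale $(\mu_t)_{t \geq 0}$ with covariance matrices $A_t$. The standard entropy bookkeeping along the localization identifies the decay rate of $\Ent(\mu_t \D \gamma_t)$, where $\gamma_t$ is an appropriately tilted Gaussian reference, with an expression proportional to $\EE \tr[A_t^2]$. Plugging Guan's bound (\ref{eq_1749}) into this identity yields linear-in-$n$ control of the entropy production, and combined with a truncation at a well-chosen time $t_*$ recovers Guan's bound $L_n \leq C \log \log n$. The residual $\log \log n$ factor arises from balancing the linear-in-$n$ entropy budget against the short-time divergence of the Gaussian reference, and removing it requires an additional input.

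That input is the Eldan--Mikulincer stability version of the Shannon--Stam inequality. For an isotropic log-concave random vector, their estimate strengthens the entropy jump along Gaussian convolution by a correction proportional to a covariance-based distance to Gaussian. Inside the stochastic localization framework, this correction contributes an extra negative term to the entropy balance, controlled by the off-isotropic part of $A_t$ rather than only its trace; it thus complements (\ref{eq_1749}) in precisely the right spectral direction. Milman's $M$-ellipsoid enters at the setup stage, where it allows one to replace $K$ by an affinely equivalent body whose Gaussian companion has comparable volume and covering numbers, ensuring that the Gaussian reference against which Eldan--Mikulincer's correction is measured is sharp up to universal constants.

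The principal technical obstacle is the final assembly of the three ingredients. Guan's inequality is linear in $n$ and uniform in $t$, whereas the Eldan--Mikulincer correction is nonlinear in the covariance discrepancy and concentrated in an intermediate time window, and the $M$-ellipsoid fixes the scale at which the Gaussian benchmark becomes faithful. Closing the loop requires a dichotomy argument: at each time $t$ along the localization, either the trace bound (\ref{eq_1749}) is sharp enough on its own to advance the entropy balance, or the measure $\mu_t$ is far enough from isotropic Gaussian that the Eldan--Mikulincer correction activates and accelerates the entropy decay. Calibrating the localization window, the truncation time and the $M$-position so that this dichotomy runs without any residual logarithmic loss is the delicate step, and it is precisely what Guan's estimate (\ref{eq_1749}) now makes possible.
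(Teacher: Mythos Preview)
Your proposal assembles the right ingredients but misidentifies how they interact, and in particular misses the key structural step that makes the argument close without a logarithmic loss.

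The crucial missing idea is a \emph{dimension reduction by projection}. In the paper, Guan's bound is not fed into an entropy balance for the original measure; instead it is used via $\frac{d}{dt}\EE\tr[A_t]=-\EE\tr[A_t^2]\ge -Cn$ to guarantee that $\EE\tr[A_{t_0}]\ge n/2$ at some fixed time $t_0\sim 1$. A pigeonhole on eigenvalues then produces a subspace $E$ of dimension $n/3$ on which $\mathrm{Proj}_E\,\EE A_{t_0}\,\mathrm{Proj}_E\ge\frac14\,\mathrm{Proj}_E$. One then \emph{replaces} $\mu$ by its marginal $\nu=(\mathrm{Proj}_E)_*\mu$ and works entirely with $\nu$. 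The point of this step is that, in the F\"ollmer normalization, the process $\Gamma_r$ for $\nu$ satisfies a uniform two-sided bound $\tilde c\cdot\id\le\EE\Gamma_r\le\id$ for all $r\ge\xi$ with $\xi\sim 1$. This is exactly what is needed to convert the Eldan--Mikulincer variance estimate into a strict contraction: the inequality $|\id-\EE\Gamma_r|^2/(1-r)\le(1-\tilde c)\,\EE|v_r|^2$ only holds once you know $\EE\Gamma_r\ge\tilde c\cdot\id$, and without it you are back to a soft estimate that leaks a logarithm. Your ``dichotomy at each time $t$'' is not how the loss is avoided; the projection step removes the bad eigendirections once and for all, and then the entropy computation is a straight line.

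Relatedly, your account of the $M$-ellipsoid is off. It is not used to calibrate a Gaussian reference. It is used to show that projecting to \emph{any} $(n/3)$-dimensional subspace preserves the isotropic constant up to a universal factor: the $M$-ellipsoid controls the volumes of all codimension-$n/3$ central sections of the extremal body $K$ from below (this is where the maximality $L_K=L_n$ is used, via the Bourgain--Klartag--Milman section bound), and that section lower bound translates directly into $L_{\nu}\gtrsim L_n$. Without this, the projection step would be useless, since you would have no control over $L_\nu$.
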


Bourgain's slicing problem, also known as the hyperplane conjecture, admits several equivalent formulations; see e.g.
Brazitikos, Giannopoulos, Valettas  and  Vritsiou \cite{BGVV}, Klartag and Milman \cite{KM_B} or Milman and Pajor \cite{MP} for background on the slicing problem.
One such formulation focuses on the relationship between two different measures of the ``size'' of a convex body: the volume of the convex body and the determinant of its covariance matrix. For a probability measure $\mu$ on $\RR^n$ with finite second moments we write
$\cov(\mu) = (\cov_{ij}(\mu))_{i,j=1,\ldots,n} \in \RR^{n \times n}$ for its covariance matrix, given by 
$$\cov_{ij}(\mu) = \int_{\RR^n} x_i x_j d \mu(x) - \int_{\RR^n} x_i d \mu(x) \int_{\RR^n} x_j d \mu(x). $$
The covariance matrix is a positive semi-definite, symmetric matrix. 
For a convex body $K \subseteq \RR^n$ (i.e., a compact, convex set with a non-empty interior)  we write $\lambda_K$ for the uniform 
probability measure on~$K$. Abbreviate $\cov(K) =\cov(\lambda_K)$. The {\it isotropic constant} of the convex body $K \subseteq \RR^n$ is defined to be
	$$ L_K := \left( \frac{\det\cov(K)}{Vol_n(K)^2} \right)^{\frac{1}{2n}}. $$
The isotropic constant is an affine invariant, that is, the isotropic constant of $K$ equals that of $T(K)$ for any affine, invertible map $T: \RR^n \rightarrow \RR^n$.
 Define 
\begin{equation}
L_n = \sup_{K \subseteq \RR^n} L_K, \label{eq_847} 
\end{equation}
where the supremum runs over all convex bodies $K \subseteq \RR^n$. 
In one of its formulations (see e.g. \cite{KM_B}), Bourgain's slicing problem asks whether $L_n < C$ for a universal constant $C > 0$.  
Consequently, Theorem \ref{thm_1744_} follows from the following:
\begin{theorem} $\displaystyle \sup_{n \geq 1} L_n < \infty$. 
\label{thm_1744}
\end{theorem}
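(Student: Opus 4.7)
The plan is to establish the equivalent entropic reformulation: for every convex body $K \subseteq \RR^n$ in isotropic position ($\cov(\lambda_K) = \id$, $Vol_n(K) = L_K^{-n}$),
$$ \Ent(\lambda_K \D \gamma_n) \leq C n, $$
where $\gamma_n$ denotes the standard Gaussian on $\RR^n$. The equivalence follows from the direct computation
$$ \Ent(\lambda_K \D \gamma_n) = n \log L_K + \tfrac{n}{2} \log(2\pi e), $$
so a linear-in-$n$ entropy bound is equivalent to $L_n \leq C$. To control this relative entropy, I would apply stochastic localization to $\mu := \lambda_K$, producing the usual measure-valued martingale $(\mu_t)_{t \geq 0}$ with random means $a_t$ and covariances $A_t = \cov(\mu_t)$, where $A_t$ satisfies the matrix Itô equation with drift $-A_t^2$.

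The central step is to invoke the Eldan--Mikulincer stability form of the Shannon--Stam inequality. Applied along the localization, it provides a representation of $\Ent(\mu \D \gamma_n)$ as an expected time-integral of a quadratic functional of $A_t$, plus a boundary term $\EE\, \Ent(\mu_T \D \gamma_n)$ at a universal terminal time $T$. Crucially, the relevant functional is dominated by $\tr(A_t^2)$: this trace is precisely the quadratic variation that arises when one expresses $\Ent(\mu \D \gamma_n)$ as the Itô integral of $d \log(d\mu_t/d\gamma_n)$ along the localization. Combining with Guan's bound $\EE\, \tr(A_t^2) \leq C_1 n$ then yields
$$ \Ent(\lambda_K \D \gamma_n) \leq C_2 \int_0^T \EE\, \tr(A_t^2)\, dt + \EE\, \Ent(\mu_T \D \gamma_n) \leq C_3 n T + \EE\, \Ent(\mu_T \D \gamma_n). $$

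For the boundary term, Milman's $M$-ellipsoid theory enters: by the affine invariance of $L_K$, I would assume that $K$ is simultaneously in the isotropic and $M$-positions, so that $K$ has mutual covering numbers with the Euclidean ball of equal volume bounded by $e^{C n}$. At time $T$, the Gaussian factor $e^{-T|x|^2/2}$ inherent in stochastic localization renders $\mu_T$ strongly log-concave; combined with the $M$-position of $K$ and a standard volume/covering comparison, this gives $\EE\, \Ent(\mu_T \D \gamma_n) = O(n)$, closing the estimate and producing $L_K \leq C$. The main obstacle I anticipate is the Eldan--Mikulincer step: the stability inequality must reduce the relative entropy to $\tr(A_t^2)$---the single spectral invariant Guan controls---rather than to more delicate quantities such as $\|A_t\|_{\mathrm{op}}^2$, for which no $O(n)$ bound is available uniformly in $t$. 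A minor technical issue, resolved by approximation, is that $\lambda_K$ is not smooth; one first mollifies by a small Gaussian and passes to a limit.
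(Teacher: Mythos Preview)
Your proposal has a genuine gap at its central step. The decomposition you invoke,
\[
D(\mu \D \gamma_n) \leq C_2 \int_0^T \EE\, \tr[A_t^2]\, dt + \EE\, D(\mu_T \D \gamma_n),
\]
is not what Eldan--Mikulincer provides, and no inequality of this shape can close the argument. Along stochastic localization the measures $\mu_t$ form a measure-valued martingale with $\EE \mu_t = \mu$, so by convexity of relative entropy $\EE\, D(\mu_T \D \gamma_n) \geq D(\mu \D \gamma_n)$: your boundary term already dominates the quantity you want to bound. Worse, each $\mu_T$ is supported on $K$, which in the isotropic normalization has volume $L_K^{-n}$, so its differential entropy is at most $-n\log L_K$ and hence $D(\mu_T \D \gamma_n) \geq n\log L_K - Cn$ deterministically. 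No $M$-position or covering argument can make $\EE\, D(\mu_T \D \gamma_n) = O(n)$ without already assuming $L_K = O(1)$, which is circular. The Eldan--Mikulincer result itself is a \emph{lower} bound on the Shannon--Stam deficit $\deltaEPI(\mu)$ in terms of the variance $\int \EE|\Gamma_r - \EE\Gamma_r|^2/(1-r)\, dr$ of the covariance process; it is not a representation of $D(\mu \D \gamma_n)$, and the relevant integrand is not $\tr[A_t^2]$.

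The paper assembles the same three ingredients quite differently, and each plays a role you have not identified. Guan's bound is used only at a single time $t_0 \sim 1$: since $\frac{d}{dt}\EE\,\Tr[A_t] = -\EE\,\Tr[A_t^2] \geq -Cn$, one obtains $\EE\,\Tr[A_{t_0}] \geq n/2$, and hence a subspace $E$ of dimension $\sim n$ on which $\EE A_{t_0} \geq \tfrac14 \cdot Proj_E$. Milman's $M$-ellipsoid theory is used not for any boundary term but to show that the marginal $\mu_E$ of the extremal isotropic body still satisfies $L_{\mu_E} \gtrsim L_n$, so that passing to $E$ loses nothing. The entropy bound then comes from pairing the Eldan--Mikulincer lower bound on $\deltaEPI(\mu_E)$ with the Ball--Nguyen upper bound $\deltaEPI(\mu_E) \leq 2\dim E$ (an ingredient absent from your sketch): together these control the variance of $\Gamma_r$, and the lower bound $\EE\Gamma_r \geq c\cdot\id$ just obtained is precisely what is needed, via an integration by parts, to convert this variance control into $\int_0^1 \EE|v_r|^2\,dr \leq C\dim E$, which by the de~Bruijn identity equals $2\,D(\mu_E \D \gamma_1)$.
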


In addition to stochastic localization and Guan's bound, our proof of Theorem \ref{thm_1744} uses 
the stability estimates for the Shannon-Stam  inequality  by Eldan and Mikulincer \cite{EM},
which, in turn, are based on the stochastic proof of this inequality from \cite{lehec}.
In fact, it was suggested already in Ball and Nguyen \cite{BN} that both the deficit in the Shannon-Stam  inequality 
and the evolution under the heat flow are relevant to the slicing problem. The
culminating new ingredient in our argument, however, is the use of Milman's theory of $M$-ellipsoids.
Indeed, Bourgain's slicing problem can be viewed as a strengthening of Milman's theory,
and tools from information theory, heat flow and stochastic analysis allow us to leverage this theory 
and prove Theorem \ref{thm_1744}. 

\medskip In principle, one could extract from our proof of Theorem \ref{thm_1744} an explicit -- though absurdly large -- upper bound for $\sup_n L_n$.
A strong version of the slicing problem asks whether the supremum in (\ref{eq_847}) is attained when $K \subseteq \RR^n$ is a simplex.
If the answer is affirmative and indeed $\sup_n L_n = 1/e$, this would imply Mahler's conjecture on the product of the volume of a convex 
body and the volume of its  polar body, see \cite{K_advances}. There is also a strong version of the slicing problem for centrally-symmetric convex 
bodies, which asks whether the supremum in (\ref{eq_847}), when restricted to centrally-symmetric convex bodies (i.e., $K = -K$), is attained for the cube.
If true, this would imply the Minkowski lattice conjecture, see Magazinov \cite{mag}.

\medskip 
A function $f: \RR^n \rightarrow [0, \infty)$ is log-concave if $K = \{ x \in \RR^n \, ; \, f(x) > 0 \}$
is a convex set, and additionally the function $-\log f: K \rightarrow \RR$ is convex.
A log-concave probability density in $\RR^n$ necessarily decays exponentially at infinity, and consequently 
admits moments of all orders (see e.g. \cite[Lemma 2.2.1]{BGVV}).
A Borel probability measure in $\RR^n$ is log-concave if there exists an affine subspace $E \subseteq \RR^n$
such that $\mu$ is supported in $E$, and is absolutely-continuous with a log-concave 
density relative to this affine subspace $E$.  The uniform probability measure on any convex body in $\RR^n$ is log-concave, as well as all Gaussian measures.
Suppose that $\mu$ is an absolutely-continuous probability measure in $\RR^n$ with density $f$. The differential entropy 
of $\mu$ is
$$ \Ent(\mu) = -\int_{\RR^n} f \log f. $$
For an absolutely-continuous, log-concave probability measure $\mu$ in $\RR^n$ we define its isotropic constant via
\begin{equation}
 L_{\mu} := e^{-\Ent(\mu) / n} \cdot \det \cov(\mu)^{1/(2n)}.
\label{eq_1750} \end{equation}
Note that $L_K = L_{\lambda_K}$. When $\mu$ is a log-concave probability measure in $\RR^n$ which is not-necessarily absolutely-continuous, 
we consider the affine subspace $E \subseteq \RR^n$ in which $\mu$ is absolutely-continuous, and 
define $L_{\mu}$ relative to the affine subspace $E$. It follows that for any one-to-one affine map $T: \RR^n \rightarrow \RR^N$ and any log-concave probability measure $\mu$ in $\RR^n$,
$$ L_{\mu} = L_{T_* \mu}, $$
where $T_* \mu$ is the push-forward of $\mu$ under the map $T$. 
When the absolutely-continuous, log-concave probability measure $\mu$ in $\RR^n$ is centered, i.e., 
when $\int x d \mu(x) = 0$, we have
\begin{equation}  -\log f(0) \leq \Ent(\mu) \leq -\log f(0) + n,
\label{eq_1048} \end{equation}
where $f$ is the log-concave density of $\mu$. See e.g. \cite[Lemma 83]{KL} for a proof of (\ref{eq_1048}).
It follows from (\ref{eq_1750}) and (\ref{eq_1048}) that for a centered probability measure $\mu$ in $\RR^n$ with a log-concave density $f$,
\begin{equation}
 L_{\mu} \leq f(0)^{1/n} \cdot \det \cov(\mu)^{1/(2n)} \leq e \cdot L_{\mu}.
\label{eq_912} \end{equation}
By combining Theorem \ref{thm_1744} with a result by Ball \cite{Ball_studia} (see also \cite{K_quarter} for the non-even case), we conclude that  
for any log-concave probability measure $\mu$ in any finite-dimensional linear space,
\begin{equation}  \frac{1}{\sqrt{2 \pi e}} \leq L_{\mu} \leq C, 
\label{eq_1910} \end{equation}
where $C > 0$ is a universal constant. See e.g. \cite[Section 9]{KL}
for the inequality on the left-hand side of (\ref{eq_1910}), in which equality is attained 
when $\mu$ is a Gaussian measure. The mathematical literature contains two slightly different 
notions  of an ``isotropic log-concave measure'':
\begin{enumerate}
\item In Bourgain's normalization, 
one says that a convex body $K \subseteq \RR^n$ is a {\it convex isotropic body of volume one} if $\lambda_K$ is centered, 
$Vol_n(K) = 1$ and $\cov(K)$ is a scalar matrix. In this case, we have $\cov(K) = L_K^2 \cdot \id$.

\item In the probabilistic normalization (going back at least to Kannan, Lov\'asz and Simonovitz \cite{KLS}), one says that a log-concave probability measure
$\mu$ in $\RR^n$ is {\it isotropic with identity covariance} if $\mu$ is centered and $\cov(\mu) = \id$.
\end{enumerate}
In this paper, unless stated otherwise, the term {\it isotropic} refers to {\it isotropic with identity covariance}. 

\medskip Throughout this paper, we write $c, C, c', \tilde{C}, c_1, C_2$ etc. for various positive universal constants 
whose value may change from one line to the next.
We write $X \lesssim Y$ for two expressions $X$ and $Y$ if 
$c X \leq Y$, where $c > 0$ is a universal constant. If $X \lesssim Y$ and $Y \lesssim X$ then we write $$ X \sim Y. $$
The Euclidean norm of $x = (x_1,\ldots,x_n) \in \RR^n$ is denoted by $|x| = \sqrt{ \sum_i x_i^2}$, and $x \cdot y = \sum_i x_i y_i$ for $x,y \in \RR^n$. We write $Vol_n$ for $n$-dimensional volume, 
$B^n = \{ x \in \RR^n \, ; \, |x| \leq 1 \}$ is the unit ball centered at the origin, and $\log$ is the natural logarithm. The orthogonal
projection operator onto a subspace $E \subseteq \RR^n$ is denoted by $Proj_E: \RR^n \rightarrow E$.
Our notation does not distinguish between a linear operator $A: \RR^n \rightarrow \RR^n$ and the matrix $[A] \in \RR^{n \times n}$  
satisfying $A (v) = [A]  v$ for any $v \in \RR^n$. Consequently we may occasionally write that a probability measure $\mu$ in $\RR^n$ satisfies 
$$ \cov(\mu) = Proj_E $$
for a subspace $E \subseteq \RR^n$. When we write $A \leq B$ for two symmetric matrices $A,B \in \RR^{n \times n}$
we mean, of course, that $A x \cdot x \leq B x \cdot x$ for all $x \in \RR^n$. For $x \in \RR^n$
we write $x \otimes x = (x_i x_j)_{i,j=1,\ldots,n} \in \RR^{n \times n}$, and $\tr[A]$ is the trace of the matrix $A$
while $|A| = \sqrt{\tr[A^* A]}$ for $A^*$ being the transpose of $A$. We write $\nabla^2 f(x) \in \RR^{n \times n}$ for the Hessian
of the smooth function $f: \RR^n \rightarrow \RR$ at the point $x \in \RR^n$.

\medskip The rest of this paper is organized as follows: In Section \ref{sec_1116} we introduce 
stochastic localization and apply Milman's theory of $M$-ellipsoids as well as Guan's bound. 
Section \ref{sec_1547} is concerned with the Shannon-Stam inequality and its stability estimates by Eldan and Mikulincer \cite{EM}.
Theorem \ref{thm_1744} is finally proved in Section \ref{sec_thmproof}. 

\medskip
\emph{Acknowledgements}. BK would like to thank Pierre Bizeul, Ronen Eldan and Vitali Milman, as well as the late Jean Bourgain, for illuminating 
discussions on the slicing problem over the years, and was supported by a grant from the Israel Science Foundation (ISF).

\section{Heat Flow and Stochastic Localization}
\label{sec_1116}

Suppose that $\mu$ is an isotropic, log-concave probability measure in $\RR^n$. Write $\rho$ for the log-concave density of $\mu$
and denote $$  p_{t, \theta}(x) = \frac{1}{Z(t,\theta)} e^{\theta \cdot x  - t |x|^2/2} \rho(x) \qquad \qquad (t \geq 0, \theta \in \RR^n,x \in \RR^n),
$$
where the normalizing constant $Z(t, \theta) > 0$ ensures that $p_{t, \theta}$ is a probability density in $\RR^n$.
The barycenter of $p_{t, \theta}$ is the vector $a(t, \theta) \in \RR^n$ defined via
$$ a(t,\theta) =  \int_{\RR^n} x p_{t, \theta}(x) \, dx. $$
The covariance matrix $A(t,\theta) \in \RR^{n \times n}$ is of course
$$ A(t,\theta) = \int_{\RR^n} (x \otimes x) p_{t, \theta}(x) \, dx \, - \, a(t, \theta) \otimes a(t, \theta). $$
Consider the stochastic process $(\theta_t)_{t \geq 0}$ with  initial condition $\theta_0 = 0$ that satisfies the stochastic differential equation
\begin{equation}  d \theta_t = d W_t + a(t, \theta_t) dt \qquad \qquad \qquad (t > 0). \label{eq_1012} \end{equation}
Here, $(W_t)_{t \geq 0}$ is a standard Brownian motion in $\RR^n$ with $W_0 = 0$. 
The strong solution to this stochastic differential equation
exists and is unique (see, e.g. Chen \cite{chen} or \cite[Section 6]{KL}). The stochastic process $(\theta_t)_{t \geq 0}$ is used  in the theory of non-linear filtering, 
particularly for estimating a constant unknown signal, see e.g. Chiganski
\cite[Chapter~6]{chiganski}. For basic properties of stochastic localization, see the lecture notes \cite{KL} or the papers by Chen \cite{chen}, Eldan \cite{eldan} and Lee and Vempala \cite{LV}, as well as \cite{root_log, KL0, KP}. 

\medskip As explained e.g. in Klartag and Putterman \cite{KP}, the process $(\theta_t)_{t \geq 0}$ has the same distribution
as the process $(t X + W_t)_{t \geq 0}$
where $X$ is a random vector with law $\mu$, independent of the Brownian motion $(W_t)_{t \geq 0}$.
Moreover, for any $t \geq 0$, the probability density $p_{t, t X + W_t}$ is the density of the conditional law of $X$ given $t X + W_t$.
This fact is explained e.g. in \cite[Sections 5 and 6]{KL} along with the following corollary:
\begin{equation}  \EE  p_{t, \theta_t}(x) = \rho(x) \qquad \qquad \qquad (t \geq 0, x \in \RR^n). \label{eq_2326} \end{equation}
We abbreviate 
$$ a_t = a(t, \theta_t) \qquad \text{and} \qquad A_t = A(t, \theta_t) \geq 0. $$
We refer to $(A_t)_{t \geq 0}$ as the {\it covariance process} associated with stochastic localization starting from the probability measure $\mu$.
From (\ref{eq_2326}) we obtain the  decomposition of variance:
\begin{equation}  \EE A_t + \EE a_t \otimes a_t = A_0 = \cov(\mu) = \id, \label{eq_2309} \end{equation}
as $\mu$ is isotropic. We will  use the following differentiation formula (e.g. \cite[Lemma 53]{KL}):
\begin{equation}
\frac{d}{dt} \EE A_t = - \EE A_t^2.
\label{eq_1645}
\end{equation}
The log-concavity of $\rho$ and the  Lichnerowicz inequality imply that almost surely, for $t > 0$,
\begin{equation}
A_t \leq \frac{1}{t} \cdot \id,
\label{eq_1711}
\end{equation}
because $A(t, \theta) \leq (1/t) \cdot \id$ for any $t > 0$ and $\theta \in \RR^n$. Inequality (\ref{eq_1711}) and its improvements are discussed in \cite{root_log}. 
For $s > 0$, we write $\gamma_s$ for the Gaussian probability measure in $\RR^n$ of mean zero and covariance $s \cdot \id$.

\begin{lemma} For any $t > 0$,
$$ \EE \left(a_t - \frac{\theta_t}{1+t} \right) \otimes \theta_t = 0. $$
\label{lem_1213}
\end{lemma}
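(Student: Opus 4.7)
My plan is to exploit the filtering representation recalled just before \eqref{eq_2326}: the process $(\theta_t)_{t \ge 0}$ has the same law as $(tX + W_t)_{t \ge 0}$ for $X \sim \mu$ independent of the Brownian motion $(W_t)_{t \ge 0}$, and under this coupling the conditional law of $X$ given $\theta_t$ has density $p_{t, \theta_t}$. In particular
\[
a_t = \EE[X \mid \theta_t].
\]
The identity then reduces to two one-line computations using the isotropy hypothesis $\EE[X \otimes X] = \cov(\mu) = \id$ and the independence of $X$ and $W_t$.

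First, by the tower property and expanding $\theta_t = tX + W_t$,
\[
\EE[a_t \otimes \theta_t] \;=\; \EE\!\left[\EE[X \mid \theta_t] \otimes \theta_t\right] \;=\; \EE[X \otimes \theta_t] \;=\; t \, \EE[X \otimes X] + \EE[X] \otimes \EE[W_t] \;=\; t \cdot \id,
\]
the cross term vanishing because $X$ is centered. Second,
\[
\EE[\theta_t \otimes \theta_t] \;=\; t^2 \, \EE[X \otimes X] \,+\, \EE[W_t \otimes W_t] \;=\; (t^2 + t) \cdot \id \;=\; t(1+t) \cdot \id,
\]
again by independence and centering. Dividing the second display by $1+t$ gives $t \cdot \id$, which matches the first, and rearranging yields the claimed identity.

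I do not anticipate any real obstacle: the only non-trivial ingredient is the filtering identification $a_t = \EE[X \mid \theta_t]$, which has already been supplied in the excerpt. An alternative would be to apply It\^o's formula to $a_t \otimes \theta_t$, using that $(a_t)$ is a martingale (with $da_t = A_t\, dB_t$ in the innovation filtration) and $d\theta_t = dW_t + a_t\, dt$, to derive an ODE for $\EE[a_t \otimes \theta_t]$ which can be integrated from the initial condition $a_0 = 0$. This route works but is considerably heavier and offers no extra insight for this particular identity, so I would stick with the direct computation above.
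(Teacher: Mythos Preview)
Your proof is correct and takes a genuinely different route from the paper's. The paper works with the density $e^{-\psi_s}$ of $\mu * \gamma_s$ (with $s = 1/t$): it first establishes the pointwise identity
\[
a(t,\theta) - \frac{\theta}{1+t} = \frac{s y}{s+1} - s\,\nabla\psi_s(y), \qquad y = \theta/t,
\]
then combines this with the integration-by-parts relation $\int (\nabla\psi_s(y)\otimes y)\,d\mu_s(y) = \id$ and the fact that $\theta_t/t$ has law $\mu_s$, together with $\cov(\mu_s) = (s+1)\cdot\id$. You instead use the filtering identification $a_t = \EE[X\mid \theta_t]$ and the representation $\theta_t = tX + W_t$ directly, so that the tower property collapses $\EE[a_t\otimes\theta_t]$ to $\EE[X\otimes\theta_t]$ and the rest is immediate from isotropy and independence. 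Your argument is shorter and more transparent for this specific identity; the paper's detour, on the other hand, produces the intermediate formula \eqref{eq_1446} which is reused later in the proof of Lemma~\ref{lem_515}, so part of its length is amortized downstream.
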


\begin{proof} For $s > 0$ we write $\mu_s = \mu * \gamma_s$ for the convolution of $\mu$ 
and $\gamma_s$. Let $e^{-\psi_s}$ be the positive, 
smooth density of $\mu_s$ in $\RR^n$. Integrating by parts yields
\begin{equation}  \int_{\RR^n} [\nabla \psi_s(y) \otimes y ] d \mu_s(y) = -\int_{\RR^n} \left[ \left( \nabla e^{-\psi_s(y)} \right) \otimes y \right] dy = \id, \label{eq_1714} \end{equation}
where no boundary terms arise as $e^{-\psi_s}$ is a smooth, positive function in $\RR^n$ decaying exponentially at infinity.
The first displayed formula in the proof of Lemma 2.2 in \cite{KP} states that
for any $t > 0$ and $\theta \in \RR^n$, with $s = 1/t$ and $y = \theta / t \in \RR^n$,
$$  \nabla \psi_s(y) = \frac{y - a(t, \theta)}{s}. $$
Hence,
\begin{equation}
 a(t, \theta) - \frac{\theta}{1+t} = \frac{s y}{s+1}  - s \nabla \psi_s(y).
\label{eq_1446} \end{equation}
The random vector $\theta_t$ coincides in law with the random vector $t X + W_t$, where $X$ and $(W_t)_{t \geq 0}$ are as above.
Hence the law of the random vector $\theta_t / t$ 
is the convolution of $\mu$ with a Gaussian measure of mean zero and covariance $(1/t) \cdot \id = s \cdot \id$. 
In other words, the random vector $\theta_t / t$ has law $\mu_s$.  
Therefore, by (\ref{eq_1714}) and (\ref{eq_1446}), and since $\mu$ is centered,
\begin{align*}  \EE \left(a(t, \theta_t) - \frac{\theta_t}{1+t} \right) \otimes \frac{\theta_t}{t} & = \int_{\RR^n} \left[  \left( \frac{s y}{s+1} - s \nabla \psi_s(y)  \right) \otimes y \right] d \mu_s(y)  \\ & = \frac{s}{s+1} \cov(\mu_s) - s \id.
\end{align*}
However, $\cov(\mu_s) = (s+1) \cdot \id$ since $\mu$ is isotropic, and the lemma follows.
\end{proof}

For any subspace $E \subseteq \RR^n$, the probability measure
$$ \mu_E = (Proj_E)_* \mu $$
is log-concave by the Pr\'ekopa-Leindler inequality. Since $\cov(\mu) = \id$, the measure $\mu_E$
is isotropic relative to the subspace $E$, i.e., it is centered with 
\begin{equation} \cov(\mu_E) = Proj_E. \label{eq_1146} \end{equation}

\begin{lemma} Let $E \subseteq \RR^n$ be a subspace. 
Let $(A_{E, t})_{t \geq 0}$ be the covariance process
associated with  stochastic localization starting from the measure $\mu_E = (Proj_E)_* \mu$. Then for $t > 0$,
\begin{equation}  \EE A_{E,t} \geq Proj_E \cdot \EE A_t \cdot Proj_E. 
\label{eq_705} \end{equation}
\label{lem_1816}
\end{lemma}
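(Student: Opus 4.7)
The plan is to couple the two stochastic localization processes through the representation $\theta_t \stackrel{d}{=} tX + W_t$, where $X \sim \mu$ is independent of the standard Brownian motion $(W_t)_{t \geq 0}$ on $\RR^n$, and then to compare conditional covariances via the law of total variance applied to the random vector $Proj_E(X)$.

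First I would record two identifications. From the fact (recalled just before Lemma \ref{lem_1213}) that $A_t$ is the conditional covariance of $X$ given $\theta_t$, it follows by the bilinearity of covariance that
$$ Proj_E \cdot A_t \cdot Proj_E \; = \; \cov\bigl(Proj_E(X) \,\big|\, \theta_t \bigr). $$
On the other hand, since $Proj_E(X) \sim \mu_E$, and since $\bigl(Proj_E(W_t)\bigr)_{t \geq 0}$ is a standard Brownian motion on $E$ independent of $Proj_E(X)$, the process
$$ Proj_E(\theta_t) \; = \; t \, Proj_E(X) \, + \, Proj_E(W_t) $$
realizes, on the subspace $E$, the stochastic localization process associated with $\mu_E$. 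Consequently
$$ A_{E,t} \; \stackrel{d}{=} \; \cov\bigl(Proj_E(X) \,\big|\, Proj_E(\theta_t)\bigr), $$
and in particular $\EE A_{E,t} = \EE \cov\bigl(Proj_E(X) \mid Proj_E(\theta_t)\bigr)$ as operators on $E$.

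Second, since $\sigma(Proj_E(\theta_t)) \subseteq \sigma(\theta_t)$, the conditional variance decomposition applied to $Y = Proj_E(X)$ gives
$$ \cov\bigl(Y \,\big|\, Proj_E(\theta_t) \bigr) \; = \; \EE\!\left[\, \cov(Y \mid \theta_t) \,\big|\, Proj_E(\theta_t) \,\right] \, + \, \cov\!\left(\, \EE[Y \mid \theta_t] \,\big|\, Proj_E(\theta_t) \,\right). $$
The second summand is positive semi-definite as a symmetric matrix. Taking expectations and using the identities from the first step, the tower property yields
$$ \EE A_{E,t} \; \geq \; \EE \bigl[ Proj_E \cdot A_t \cdot Proj_E \bigr] \; = \; Proj_E \cdot \EE A_t \cdot Proj_E, $$
which is precisely (\ref{eq_705}).

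The main obstacle to verify cleanly is the identification $A_{E,t} \stackrel{d}{=} \cov(Proj_E(X) \mid Proj_E(\theta_t))$, which is where one has to check that stochastic localization on $\mu_E$ (understood relative to the subspace $E$, where $\mu_E$ is isotropic by (\ref{eq_1146})) is realized by simply projecting the driving Brownian motion and the underlying sample. Once this intertwining is in place, the inequality is just the law of total variance combined with the linearity of expectation.
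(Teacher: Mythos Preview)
Your proof is correct and follows essentially the same route as the paper: represent both covariance processes via the conditioning picture $\theta_t \stackrel{d}{=} tX + W_t$ (the paper uses the equivalent $X + \sqrt{s}Z$ with $s=1/t$), identify $\EE A_{E,t}$ with $\EE\cov(Proj_E X \mid Proj_E \theta_t)$ using that $Proj_E W_t$ is a Brownian motion on $E$ independent of $Proj_E X$, and then compare conditional covariances for the nested $\sigma$-fields $\sigma(Proj_E \theta_t)\subseteq\sigma(\theta_t)$. The paper states this last step simply as ``a smaller $\sigma$-field gives larger expected conditional covariance,'' which is exactly your law-of-total-variance decomposition after taking expectations.
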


\begin{proof}
Set $s = 1/t$. Let $X$ be a random vector with law $\mu$, 
and let $Z$ be a standard Gaussian random vector in $\RR^n$, independent of $X$.  
For $\theta \in \RR^n$ write $\mu_{t, \theta}$ for the probability measure in $\RR^n$ with density $p_{t, \theta}$.
We claim that 
the covariance matrix $A_t = \cov(\mu_{t, \theta_t})$  coincides in law with 
$$
\cov ( X \mid X+\sqrt s Z ), 
$$ 
the conditional covariance of $X$ given $X + \sqrt{s} Z$.
Indeed, this follows from the fact that the measure $\mu_{t, \theta_t}$ has the same distribution as the conditional law 
of $X$ given $X+\sqrt s Z$ (see~\cite[Sections 5 and 6]{KL}).
Consequently,
$$ 
\EE A_t = \EE \cov ( X \mid X+\sqrt s Z ).
$$
In a similar way, since $Proj_E Z$ is a standard Gaussian vector in $E$ 
independent of $Proj_E X$, 
\[ 
\begin{split}
\EE A_{E,t} & = \EE \cov ( Proj_E X \mid Proj_E ( X+\sqrt s Z) )  \\
& = Proj_E \cdot \EE \cov (  X \mid Proj_E ( X+\sqrt s Z) ) \cdot Proj_E .
\end{split} 
\] 
Since $Proj_E ( X+\sqrt s Z)$ is a function of $X+ \sqrt{s} Z$, it generates a smaller 
$\sigma$-field. Hence, 
 \begin{equation} 
\EE \cov ( X \mid X+\sqrt s Z )
\leq 
\EE \cov ( X \mid Proj_E ( X+\sqrt s Z) ).
\label{eq_906} \end{equation}
Multiplying (\ref{eq_906}) on both sides by $Proj_E$ yields the result. 
\end{proof}

Guan's bound (\ref{eq_1749}), established in \cite{guan}, is described in the following lemma\footnote{Notes on Guan's proof of this lemma are available at \url{https://tinyurl.com/3fwp3u22}}:

\begin{lemma}[Guan] Suppose that $\mu$ is an isotropic, log-concave probability measure in $\RR^n$. 
Let $(A_t)_{t \geq 0}$ be the covariance process associated with stochastic localization starting from $\mu$. Then
for any $t > 0$,
$$ \EE \tr[A_t^2] \leq C n,
$$
where $C > 0$ is a universal constant.
\label{guan}
\end{lemma}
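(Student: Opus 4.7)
My plan follows the stochastic-analytic strategy used by Eldan, Chen, root\_log, and most decisively by Guan himself. The starting point is the SDE satisfied by the covariance matrix process. A direct computation using It\^o calculus shows that $A_t$ evolves as
$$
dA_t = - A_t^2 \, dt + \cR_t \cdot dW_t,
$$
where $\cR_t$ is built from the centered third-moment tensor of the tilted log-concave density $p_{t, \theta_t}$ and carries the martingale part. The drift $-A_t^2$ reproduces the differentiation identity (\ref{eq_1645}); the martingale part is what could, in principle, inflate quadratic observables of $A_t$.

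Next I would apply It\^o's formula to $\tr[A_t^2]$ and take expectations. The resulting ODE has the form
$$
\frac{d}{dt} \EE \tr[A_t^2] \, = \, - 2 \EE \tr[A_t^3] \, + \, \EE \cI_t,
$$
where $\cI_t \geq 0$ is the It\^o correction coming from the quadratic variation of the martingale part; it is a quadratic functional of the third-moment tensor of $p_{t, \theta_t}$. The contribution $-2 \EE \tr[A_t^3]$ alone is a negative drift on $\EE \tr[A_t^2]$. The whole game is to show that the positive correction $\EE \cI_t$ does not dominate this negative drift badly enough to spoil the linear-in-$n$ bound we are aiming for.

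This is where Guan's key ingredient enters. The three-tensor in $\cI_t$ is controlled using the improved log-concave Lichnerowicz inequality established in root\_log; combined with the pointwise bound $A_t \leq (1/t)\, \id$ from (\ref{eq_1711}), this yields a differential inequality for $\EE \tr[A_t^2]$ that can be closed. Integrating it against the initial condition $\EE \tr[A_0^2] = n$ (recall $\mu$ is isotropic) would give $\EE \tr[A_t^2] \leq C n$ uniformly in $t > 0$. For large $t$ the estimate $A_t \leq (1/t)\id$ trivially gives $\tr[A_t^2] \leq n/t^2$, so the real content is at small $t$, where the delicate It\^o analysis is indispensable.

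The main obstacle is precisely this second step: the earlier approach via the standard Lichnerowicz inequality (as in Chen) only bounds $\cI_t$ in a way that produces essentially exponential growth in $n$, which is useless for the present purpose. The refined three-tensor estimate from root\_log is exactly what brings the bound down to the linear-in-$n$ regime, and engineering the delicate cancellation between the drift $-2 \tr[A_t^3]$ and the correction $\cI_t$ is the heart of Guan's argument. I would defer to \cite{guan} (and the accompanying notes cited in the footnote) for the execution of this tensor manipulation.
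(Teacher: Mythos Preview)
Your proposal is consistent with the paper's treatment: the paper does not give its own proof of this lemma but simply attributes it to Guan \cite{guan}, with a footnote pointing to supplementary notes. Your sketch of the It\^o evolution of $\tr[A_t^2]$, the third-moment tensor correction, and the role of the improved Lichnerowicz inequality from \cite{root_log} matches the paper's own description of Guan's argument in the introduction, and like the paper you ultimately defer to \cite{guan} for the execution.
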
 

The remainder of this section 
is devoted to the formulation and proof of the following:

\begin{proposition} Let $n \geq 10$. Then there exists an integer $n/4 \leq m \leq n$, a real number $t_0 \sim 1$ and an isotropic, log-concave probability measure $\nu$ in $\RR^m$
such that $$ L_{\nu} \gtrsim L_n $$ and
\begin{equation}  \EE A_{t_0} \geq \frac{1}{4} \cdot \id. \label{eq_1148} \end{equation}
Here, $(A_t)_{t \geq 0}$ is the covariance process associated with stochastic localization starting from~$\nu$.
\label{prop_1115}
\end{proposition}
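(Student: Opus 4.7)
The plan is to start from a nearly extremal convex body $K_0 \subseteq \RR^n$, i.e., one with $L_{K_0} \geq L_n/2$. Using Milman's theory of $M$-ellipsoids, I replace $K_0$ by a linear image $K$ that is simultaneously in the (probabilistic) isotropic position, so that $\mu := \lambda_K$ satisfies $\cov(\mu) = \id$ and $Vol_n(K)^{1/n} = 1/L_K$, and in an $M$-position strong enough to give the uniform projection bound
\begin{equation*}
Vol_m(Proj_E K)^{1/m} \leq C/L_K \qquad \text{for every subspace } E \subseteq \RR^n \text{ with } \dim E \geq n/4.
\end{equation*}
Affine invariance of the isotropic constant preserves $L_K = L_{K_0} \gtrsim L_n$.

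With $\mu$ isotropic I run stochastic localization. Combining Guan's bound (Lemma~\ref{guan}) with the differentiation formula (\ref{eq_1645}) and $\tr \EE A_0 = n$,
\begin{equation*}
\tr \EE A_t = n - \int_0^t \tr \EE A_s^2 \, ds \geq n - Cnt,
\end{equation*}
so for $t_0 := 1/(2C) \sim 1$ I obtain $\tr \EE A_{t_0} \geq n/2$. The variance decomposition (\ref{eq_2309}) forces $\EE A_{t_0} \leq \id$, so the eigenvalues of the symmetric matrix $B := \EE A_{t_0}$ lie in $[0,1]$ and sum to at least $n/2$. A pigeonhole argument (writing $\tr B \leq m + (n-m)/4$ where $m$ counts the eigenvalues at least $1/4$) then shows that the subspace $E$ spanned by the eigenvectors of $B$ with eigenvalue $\geq 1/4$ has dimension $m \geq n/3 \geq n/4$; on $E$ we have $Proj_E \cdot B \cdot Proj_E \geq \tfrac{1}{4} Proj_E$. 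Setting $\nu := (Proj_E)_* \mu$, Lemma~\ref{lem_1816} yields
\begin{equation*}
\EE A_{E,t_0} \geq Proj_E \cdot B \cdot Proj_E \geq \tfrac{1}{4} Proj_E,
\end{equation*}
which is the required (\ref{eq_1148}), and (\ref{eq_1146}) ensures that $\nu$ is isotropic in $E$.

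The last step is to check $L_\nu \gtrsim L_n$. By (\ref{eq_912}) and the isotropicity of $\nu$, $L_\nu \sim f_\nu(0)^{1/m}$, where $f_\nu$ is the density of $\nu$; explicitly, $f_\nu(0) = Vol_{n-m}(K \cap E^\perp)/Vol_n(K)$. Since $\nu$ is centered and log-concave, Fradelizi's inequality $f_\nu(0) \geq e^{-m} \|f_\nu\|_\infty$ together with the trivial $\|f_\nu\|_\infty \geq 1/Vol_m(Proj_E K)$ gives
\begin{equation*}
f_\nu(0) \geq \frac{e^{-m}}{Vol_m(Proj_E K)} \geq \left(\frac{L_K}{eC}\right)^m,
\end{equation*}
using the $M$-position projection bound. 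Taking $m$th roots yields $L_\nu \gtrsim L_K \gtrsim L_n$.

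The main obstacle is the first step: producing a linear image of $K_0$ that is simultaneously isotropic and in an $M$-position with $Vol_m(Proj_E K)^{1/m} \lesssim 1/L_K$ for all subspaces $E$ of dimension $\geq n/4$. This is precisely where Milman's $M$-ellipsoid theory must do the work; without it, the marginal $\nu$ produced by the spectral counting could have $L_\nu \ll L_K$, and the argument would collapse. The stochastic-localization half is then a comparatively direct combination of Guan's bound, the differentiation formula, the decomposition of variance, the projection lemma, and Fradelizi's inequality.
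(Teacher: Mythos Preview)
Your stochastic-localization half (Guan's bound $\Rightarrow$ $\tr \EE A_{t_0}\ge n/2$ $\Rightarrow$ eigenvalue counting $\Rightarrow$ Lemma~\ref{lem_1816}) matches the paper. The gap is exactly where you flag it, but it is a real gap, not a technicality that Milman's theory ``does the work'' on.

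You ask for a linear image $K$ of $K_0$ that is \emph{simultaneously} in isotropic position ($\cov(\lambda_K)=\id$) and in an $M$-position giving $Vol_m(Proj_E K)^{1/m}\le C/L_K$ for all $E$ with $\dim E\ge n/4$. But the isotropic position is unique up to $O(n)$; there is no further linear freedom to spend on an $M$-position. So your hypothesis is not something one can \emph{arrange}: it is a property the isotropic body either has or does not have. Asserting that every convex body in isotropic position satisfies such uniform projection bounds is essentially asserting that the isotropic position is an $M$-position, which is (up to standard reductions) the slicing conjecture itself. Milman's $M$-ellipsoid theorem gives you an ellipsoid $\cE$ with $N(K,\cE),N(\cE,K)\le e^{Cn}$, but says nothing about $\cE$ being round when $K$ is isotropic.

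The paper avoids this circularity by using \emph{exact} extremality $L_K=L_n$ (not $L_K\ge L_n/2$) as an extra structural input. By \cite[Corollary~3.5]{BKM}, maximality of $L_K$ forces the section bound $Vol_k(K\cap E)^{1/(n-k)}\le C$ in the volume-one normalization. This upper bound on sections, combined with the $M$-ellipsoid covering estimates, controls the axes of $\cE$ well enough to yield a uniform \emph{lower} bound $Vol_{n-\ell}(K\cap F)\ge c^n$ for all $(n-\ell)$-dimensional subspaces $F$ (Lemma~\ref{lem_1148}). Since $f_\nu(0)=L_K^{\ell}\cdot Vol_{n-\ell}(K\cap E^\perp)$, this gives $L_\nu\gtrsim L_K=L_n$ directly, without ever needing the isotropic body to be in $M$-position. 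The missing idea in your sketch is precisely this use of the BKM extremality criterion to extract section (hence marginal-density) control from the $M$-ellipsoid.
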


The proof of Proposition \ref{prop_1115} requires some preparation.
It was shown by Bourgain, Klartag and Milman \cite[Proposition 1.3]{BKM}
that if $m \leq n$ then 
$$
L_m \leq C \cdot L_n. 
$$
Thus in the proof of Proposition \ref{prop_1115},
we may slightly increase $n$ if needed, assume that $n \geq 12$ is a number divisible by six,
and prove the proposition with $m = n/3$.

\medskip 
Let $K \subseteq \RR^n$ be a convex isotropic body of volume one with
\begin{equation}
L_K = L_n,
\label{eq_848} \end{equation}
which exists since the supremum in (\ref{eq_847}) is attained. To avoid confusion, we emphasize that 
\begin{equation}
Vol_n(K) = 1,
\label{eq_1417}
\end{equation}
and $$ \cov(K) = L_K^2 \cdot \id. $$
Write $\mu$ for the uniform probability measure on the convex body $ L_K^{-1} \cdot K = \{ x / L_K \, ; \, x \in K \}$. Then $\mu$ is a centered probability measure with identity covariance.
Thus $\mu$ is an isotropic, log-concave probability measure.  
For two sets $A, B \subseteq \RR^n$ we write
$$ N(A,B) = \min \{ L \geq 1 \, ; \, \exists x_1,\ldots,x_L \in \RR^n \textrm{ such that } \ A \subseteq \cup_{i = 1}^L (x_i + B) \} $$
for the covering number of $A$ by $B$. Here, $x + B = \{ x + y \, ; \, y \in B \}$.

\begin{lemma} For any subspace $E \subseteq \RR^n$ of dimension $\dim(E) = n/3$, the measure 
$\mu_E = (Proj_E)_* \mu$ satisfies
$$ L_{\mu_E} \geq c \cdot L_n, $$
where $c > 0$ is a universal constant. \label{lem_1148}
\end{lemma}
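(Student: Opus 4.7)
The plan is to reduce $L_{\mu_E} \gtrsim L_n$ to a lower bound on the central section $Vol_{n-m}(K \cap E^\perp)$ via the normalization identity (\ref{eq_912}), and then to control that section by combining Fradelizi's lemma with Milman's theory of $M$-ellipsoids applied to the projection volume.

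A direct change-of-variables computation for the marginal of $\mu$ (the uniform measure on $L_K^{-1} K$, of density $L_K^n$ on its support) gives
$$g(0) \ = \ L_K^n \cdot Vol_{n-m}\bigl((L_K^{-1} K) \cap E^\perp\bigr) \ = \ L_K^m \cdot Vol_{n-m}(K \cap E^\perp),$$
where $g$ denotes the density of $\mu_E$ on $E$. Because $\mu_E$ is a centered log-concave probability measure on $E$ with $\cov(\mu_E) = \id_E$ (so $\det \cov(\mu_E) = 1$), applying (\ref{eq_912}) to $\mu_E$ yields
$$L_{\mu_E} \ \geq \ \frac{g(0)^{1/m}}{e} \ = \ \frac{L_K}{e} \cdot Vol_{n-m}(K \cap E^\perp)^{1/m}.$$
Hence it suffices to prove $Vol_{n-m}(K \cap E^\perp)^{1/m} \gtrsim 1$.

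To produce this volumetric lower bound, I introduce the log-concave probability density $V(t) := Vol_{n-m}(K \cap (t + E^\perp))$ on $E$, which is centered because $K$ is and integrates to $Vol_n(K) = 1$. Fradelizi's lemma gives $V(0) \geq e^{-m} \|V\|_\infty$, and since $V$ is supported on $Proj_E K$ we have $\|V\|_\infty \geq 1/Vol_m(Proj_E K)$. The task thus reduces to establishing a uniform projection bound of the form
$$Vol_m(Proj_E K)^{1/m} \ \leq \ C$$
for a universal constant $C$.

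The projection bound is precisely where Milman's $M$-ellipsoid theory enters, and it is the main obstacle in the argument. The relevant geometric input is that the convex isotropic body $K$ of volume one admits a covering estimate of the form $N(K, R_0 B^n) \leq e^{C_1 n}$, where $R_0 B^n$ is a Euclidean ball of volume one (so $R_0 \sim \sqrt{n}$). Projecting any such covering onto $E$ gives $Vol_m(Proj_E K) \leq e^{C_1 n} \cdot Vol_m(R_0 B^m) \lesssim e^{C_1 n} (n/m)^{m/2}$, and with $m = n/3$ this collapses to $Vol_m(Proj_E K)^{1/m} \leq C$. Chaining the three estimates produces $L_{\mu_E} \gtrsim L_K = L_n$. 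The substantive content is the covering bound itself, i.e.\ the assertion that the isotropic position of $K$ is (up to universal constants) an $M$-position with the volume-one Euclidean ball as the $M$-ellipsoid; once this is granted, the remaining ingredients ((\ref{eq_912}), Fradelizi, and the subadditivity of volume under coverings) are routine.
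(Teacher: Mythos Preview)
Your reduction to the section bound $Vol_{n-m}(K\cap E^\perp)^{1/m}\gtrsim 1$ via (\ref{eq_912}) is correct and matches the paper, and the Fradelizi step turning this into an upper bound on $Vol_m(Proj_E K)$ is fine. The gap is in the last paragraph: the ``geometric input'' you invoke, namely $N(K, R_0 B^n)\leq e^{C_1 n}$ for an \emph{isotropic} convex body $K$ of volume one with $R_0 B^n$ the volume-one Euclidean ball, is not a known theorem you can cite --- it is one of the standard equivalent formulations of Bourgain's slicing problem itself. Asserting that the isotropic position is an $M$-position (with the round ball as $M$-ellipsoid) is precisely $L_n\leq C$, so the argument is circular. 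Equivalently, your target inequality $Vol_m(Proj_E K)^{1/m}\leq C$ for \emph{all} $m$-dimensional subspaces $E$ and \emph{all} isotropic $K$ is again a reformulation of slicing.

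What the paper does instead is exploit a feature of the setup you never used: the body $K$ is not an arbitrary isotropic body, it is an \emph{extremizer}, i.e.\ $L_K=L_n$. Milman's theorem supplies an $M$-ellipsoid $\cE$ for $K$ (an honest ellipsoid, not a ball) with $N(K,\cE),N(\cE,K)\leq e^{Cn}$; this much is unconditional. The extremality of $K$ is then fed through \cite[Corollary~3.5]{BKM} to bound all section volumes $Vol_k(K\cap E)^{1/(n-k)}\leq C$, and the argument of \cite[Section~4.1]{BKM} converts this into control on the \emph{short} semi-axes of $\cE$: specifically $(\prod_{i\leq n/2}\lambda_i)^{2/n}\gtrsim 1$. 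From there one gets a lower bound on $Vol_{n-\ell}(\cE\cap F)$ for every $(n-\ell)$-dimensional $F$, transfers it to $K$ via $N(\cE,K)\leq e^{Cn}$ and Fradelizi, and concludes. The essential missing idea in your write-up is the use of $L_K=L_n$ to tame the shape of the $M$-ellipsoid; without it, the round-ball covering estimate you need is exactly what the whole paper is trying to prove.
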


\begin{proof} 
Note that the density of $\mu$ equals 
$L_K^n$ in the convex body $L_K^{-1} \cdot K$ in which it is supported. Write $\rho_E: E \rightarrow [0, \infty)$ for the log-concave density of $\mu_E$. Set $\ell = \dim(E) = n/3$. Then,
$$ \rho_E(0) = L_K^n \cdot Vol_{n-\ell}(L_K^{-1} \cdot K \cap E^{\perp}) = L_k^{\ell} \cdot Vol_{n - \ell}(K \cap E^{\perp}), $$
where $E^{\perp} \subseteq \RR^n$ is the orthogonal complement to $E$. According to (\ref{eq_912}), 
\begin{equation} L_{\mu_E} \sim \rho_E(0)^{1/\ell} \cdot (\det \nolimits_E \cov(\mu_E))^{1/\ell} = \rho_E(0)^{1/\ell} 
= L_K \cdot Vol_{n-\ell}(K \cap E^{\perp})^{1/\ell},
\label{eq_947} \end{equation}
where $\det_E(T)$ denotes the determinant of a linear operator $T: E \rightarrow E$, thus $\det_E \cov(\mu_E) = \det_E Proj_E = 1$
by (\ref{eq_1146}).

\medskip 
We will use the theory of $M$-ellipsoids from Milman \cite{M1}, see also  Milman and Pajor \cite{MP2}, Pisier \cite[Chapter 7]{pisier_book} 
or Brazitikos, Giannopoulos, Valettas  and  Vritsiou \cite[Chapter~1]{BGVV}. This theory implies that for any convex body $K \subseteq \RR^n$,
there exists an ellipsoid $\cE \subseteq \RR^n$ with $Vol_n(\cE) = Vol_n(K)$ such that
\begin{equation} \max \left \{ N(K, \cE), N(\cE, K) \right \} \leq e^{C n}. 
\label{eq_930} \end{equation}
As in Bourgain, Klartag and Milman \cite{BKM}, we write $\sqrt{n} \lambda_1 \leq \sqrt{n} \lambda_2 \leq \ldots \leq \sqrt{n} \lambda_n$
for the lengths of the axes of the ellipsoid $\cE \subseteq \RR^n$. Then
\begin{equation}  \left( \prod_{i=1}^n \lambda_i \right)^{1/n} = Vol_n(\cE)^{1/n} \cdot 2Vol_n(B^n)^{-1/n} / \sqrt{n} =  2Vol_n(B^n)^{-1/n} / \sqrt{n}\sim 1, 
\label{eq_1439} \end{equation}
where we used the fact that $Vol_n(\cE) = Vol_n(K) = 1$, thanks to (\ref{eq_1417}).  According 
to  \cite[Corollary 3.5]{BKM}, the maximality property (\ref{eq_848}) of the isotropic constant implies 
that for any $1 \leq k \leq n$ and any $k$-dimensional subspace $E \subseteq \RR^n$, 
\begin{equation} Vol_{k}(K \cap E)^{1/(n-k)} \leq C. \label{eq_932} \end{equation}
In \cite[Section 4.1]{BKM} it is shown (see the formula displayed before (4) in \cite{BKM}) that  (\ref{eq_930}), (\ref{eq_1439}) and  (\ref{eq_932}) imply that
\begin{equation}  \left( \prod_{i=1}^{n-\ell} \lambda_i \right)^{n - \ell}  = \left( \prod_{i=1}^{2n/3} \lambda_i \right)^{3/(2n)} \geq \left( \prod_{i=1}^{n/2} \lambda_i \right)^{2/n} > c. 
\label{eq_2138} \end{equation}
Among all $(n-\ell)$-dimensional central sections of a given ellipsoid, the central section of minimal volume is the one spanned by the shortest axes. 
This standard fact may be proved by the min-max characterization of the eigenvalues of a symmetric matrix.
Thus, for any subspace $F \subseteq \RR^n$ with $\dim(F) = n-\ell$, by (\ref{eq_2138}),
\begin{equation}  Vol_{n-\ell} ( \cE \cap F ) \geq Vol_{n-\ell} ( \cE \cap F_0 ) = Vol_{n-\ell}(B^{n-\ell}) \prod_{i=1}^{n-\ell} \left( \frac{\sqrt{n}}{2} \lambda_i \right) \geq c^n 
\label{eq_2139} \end{equation}
where $F_0 \subseteq \RR^n$ is the $(n-\ell)$-dimensional subspace spanned by the axes of the ellipsoid $\cE$ corresponding to $\lambda_1,\ldots,\lambda_{n-\ell}$.
By using (\ref{eq_930}) and (\ref{eq_2139}), we see that for any $(n-\ell)$-dimensional subspace $F \subseteq \RR^n$,
\begin{align} \nonumber \tilde{c}^n & \leq Vol_{n-\ell} ( \cE \cap F ) \leq N(\cE, K) \cdot \max_{x \in \RR^n} Vol_{n-\ell}( (x+K) \cap F )
\\ & \leq e^{C n} \cdot \max_{x \in \RR^n} Vol_{n-\ell}( (x+K) \cap F ). \label{eq_2203} \end{align}
Since $K$ is centered, $\max_{x \in \RR^n} Vol_{n-\ell}( (x+K) \cap F ) \leq C^n Vol_{n-\ell}(K \cap F)$ according to Fradelizi \cite{F}.
It thus follows from (\ref{eq_2203}) that for any $(n-\ell)$-dimensional subspace $F \subseteq \RR^n$,
\begin{equation}  
Vol_{n-\ell}( K \cap F ) \geq  c^n = \left( c^3 \right)^{\ell} = \tilde{c}^{\ell}. \label{eq_954} \end{equation}
In particular, (\ref{eq_954}) applies for the subspace $F = E^{\perp}$. From (\ref{eq_947}) and (\ref{eq_954}),
$$ L_{\mu_E} \gtrsim L_K = L_n $$
where the last passage is the content of (\ref{eq_848}).
\end{proof}

Recall the covariance process $(A_t)_{t \geq 0}$ that is associated with stochastic localization starting from the measure $\mu$. 
By (\ref{eq_1645}) and  Lemma \ref{guan}, 
\begin{equation}  \frac{d}{dt} \EE \Tr[A_t] = -\EE \Tr[A_t^2]\geq -C n \qquad \qquad (t > 0). 
 \label{eq_2205} \end{equation}
Set $c_0 = \min \{1/(2C), 1 \}$ where $C > 0$ is the universal constant from (\ref{eq_2205}). 
Since $A_0 = \id$ as $\mu$ is isotropic, 
\begin{equation}  \EE \Tr[A_t] = n - \int_0^t \EE \Tr[A_s^2] ds \geq n - \frac{1}{2C} \cdot C n 
= n /2 \qquad \qquad \qquad \text{for} \ t = c_0.
\label{eq_2310} \end{equation}

\begin{lemma} There exists a subspace $E \subseteq \RR^n$ with $\dim(E) = n/3$ such that for $t = c_0$,
\begin{equation} Proj_E \cdot \EE A_t \cdot Proj_E \geq \frac{1}{4} \cdot Proj_E, \label{eq_2322} \end{equation}
where $c_0 > 0$ is the universal constant from (\ref{eq_2310}). 
\label{lem_2326}
\end{lemma}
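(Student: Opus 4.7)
The plan is to set $B := \EE A_{c_0}$ and exploit two opposing bounds on this symmetric positive semidefinite matrix. First, the variance decomposition (\ref{eq_2309}) yields for free the upper bound
$$ B \;=\; \id \;-\; \EE \, a_{c_0} \otimes a_{c_0} \;\leq\; \id, $$
because $\EE \, a_{c_0} \otimes a_{c_0}$ is positive semidefinite. Second, the trace lower bound (\ref{eq_2310}), which is where Guan's estimate enters through (\ref{eq_2205}), gives $\tr[B] \geq n/2$.

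From here I would diagonalize $B$ and label its eigenvalues $\lambda_1 \geq \lambda_2 \geq \ldots \geq \lambda_n \geq 0$, with an associated orthonormal basis of eigenvectors $v_1, \ldots, v_n$. Set $k = n/3$, an integer since $n$ is divisible by six. The goal is to show $\lambda_k \geq 1/4$, after which the lemma follows immediately by taking $E = \text{span}(v_1, \ldots, v_{n/3})$: on this subspace $B$ acts as $\sum_{i=1}^{n/3} \lambda_i \, v_i \otimes v_i \geq (1/4) \cdot Proj_E$, which is precisely (\ref{eq_2322}). To establish $\lambda_{n/3} \geq 1/4$, I would argue by contradiction. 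If $\lambda_{n/3} < 1/4$, then $\lambda_i \leq 1$ for $i < n/3$ (from $B \leq \id$) and $\lambda_i < 1/4$ for $i \geq n/3$, so
$$ \tr[B] \;<\; \left( \frac{n}{3} - 1 \right) \cdot 1 \;+\; \left( \frac{2n}{3} + 1 \right) \cdot \frac{1}{4} \;=\; \frac{n}{2} - \frac{3}{4}, $$
contradicting $\tr[B] \geq n/2$.

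There is essentially no obstacle here; the lemma is a one-step linear-algebra corollary of two facts already in place. The only point worth emphasizing is that the argument genuinely requires \emph{both} the upper bound $B \leq \id$ from the variance decomposition and the trace lower bound from Guan's estimate: without the former, the trace bound alone would not suffice, since a single very large eigenvalue could carry the entire trace while leaving the remaining eigenvalues arbitrarily close to zero, in which case no $(n/3)$-dimensional subspace would satisfy (\ref{eq_2322}).
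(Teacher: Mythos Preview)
Your proof is correct and follows essentially the same approach as the paper: use $\EE A_{c_0} \leq \id$ from the variance decomposition together with the trace bound $\tr[\EE A_{c_0}] \geq n/2$, then a pigeonhole/contradiction argument on the eigenvalues to locate an $(n/3)$-dimensional eigenspace on which all eigenvalues are at least $1/4$. The only cosmetic difference is that the paper orders eigenvalues increasingly and shows $\lambda_{2n/3} \geq 1/4$, whereas you order them decreasingly and show $\lambda_{n/3} \geq 1/4$; these are equivalent statements.
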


\begin{proof} From (\ref{eq_2309}) we know that 
 $$ \EE A_t \leq \id. $$ Write $0 < \lambda_1 \leq \lambda_2 \leq \ldots \leq \lambda_n$ for 
the eigenvalues of $\EE A_t$, repeated according to their multiplicity. Then $\lambda_i \leq 1$ for all $i$, while (\ref{eq_2310}) yields
\begin{equation} 
 \sum_{i=1}^n \lambda_i \geq n/2. 
 \label{eq_2319} \end{equation}
We claim that 
\begin{equation}  \lambda_{2n / 3} \geq 1/4. \label{eq_2320} \end{equation}
Indeed, otherwise $\lambda_i < 1/4$ for all $i \leq 2n/3$ and consequently
$$ \sum_{i=1}^n \lambda_i < \frac{1}{4} \cdot \frac{2n}{3} + 1 \cdot \frac{n}{3}  = \frac{n}{2}, $$
in contradiction to (\ref{eq_2319}). Hence (\ref{eq_2320}) is proved. Let $E \subseteq \RR^n$ be the $(n/3)$-dimensional subspace spanned 
by eigenvectors of $\EE A_t$ corresponding to the eigenvalues $\lambda_{2n / 3+1},\ldots,\lambda_n$. 
All of these 
eigenvalues are not smaller than $1/4$, according to (\ref{eq_2320}). This implies (\ref{eq_2322}).
\end{proof}

\begin{proof}[Proof of Proposition \ref{prop_1115}] Let $E \subseteq \RR^n$ be the $(n/3)$-dimensional subspace whose 
existence is guaranteed by Lemma \ref{lem_2326}. By the conclusions of Lemma \ref{lem_1816} and Lemma \ref{lem_2326},
for $t = t_0 \sim 1$,
\begin{equation}  \EE A_{E,t} \geq Proj_E \cdot \EE A_t \cdot Proj_E \geq \frac{1}{4} \cdot Proj_E,
\label{eq_1147} \end{equation}
where $t_0 = c_0$ is the universal constant from (\ref{eq_2310}). Set $m = n/3$  and $\nu = (Proj_E)_* \mu$. Select an orthonormal basis in $E$ and use it to identify $E \cong \RR^m$.
Then $\nu$ is an isotropic, log-concave probability measure in $\RR^m$ satisfying (\ref{eq_1148}), 
thanks to (\ref{eq_1146}) and (\ref{eq_1147}). Moreover, by Lemma \ref{lem_1148},
$$ L_{\nu} = L_{(Proj_E)_* \mu} \geq c L_n, $$
thus completing the proof.
\end{proof}

\section{Stability in the Shannon-Stam Inequality}
\label{sec_1547}

Throughout this section, we assume  that $\mu$ is an isotropic, log-concave probability measure in $\RR^n$. 
Recall from Section \ref{sec_1116} that for $t \geq 0$ and $\theta \in \RR^n$ we consider a certain probability density
$$ p_{t, \theta}: \RR^n \rightarrow [0, \infty) $$
whose barycenter and covariance are denoted by $a(t, \theta) \in \RR^n$ and $A(t, \theta) \in \RR^{n \times n}$.
We  switch to the F\"ollmer drift normalization used by Eldan and Mikulincer \cite{EM}, in order 
to apply their results on stability in the Shannon-Stam inequality.  We change variables as follows:
\begin{equation}  r = \frac{t}{t+1} \qquad \text{and} \qquad x = \frac{\theta}{1+t} = (1-r) \theta. 
\label{eq_1359} \end{equation}
With this change of variables, denote
\begin{equation}  v_r(x) = (1+t) a(t,\theta) - \theta \qquad \qquad \qquad (0 \leq r < 1, x \in \RR^n).  \label{eq_1449} \end{equation}
Let $(W_t)_{t \geq 0}$ be a standard Brownian motion in $\RR^n$ with $W_0 = 0$, 
and let $X$ be a random vector with law $\mu$ that is independent of the Brownian motion.
Recall the stochastic process $(\theta_t)_{t \geq 0}$ from Section \ref{sec_1116}, that coincides in law with the process $(t X +  W_t)_{t \geq 0}$.
For $r \in [0,1)$ we use the change of variables (\ref{eq_1359}) and set
\begin{equation}  X_r = (1 - r) \theta_{t} = (1 - r) \theta_{r/(1-r)}. \label{eq_1636} \end{equation}
Thus the process $(X_r)_{0 \leq r < 1}$ coincides in law with the process $( r X + (1-r) W_{r/(1-r)} )_{0 \leq r < 1}$. 
We remark that the law of the process $( (1-r) W_{r/(1-r)} )_{0 \leq r < 1}$ is that of the Brownian bridge.

\medskip 
Let $Z$ be a standard Gaussian random vector in $\RR^n$ that is independent of $X$. For any fixed $r \in [0,1)$, the 
random vector $X_r$ coincides in law with 
\begin{equation}  r X + \sqrt{r(1-r)} Z. \label{eq_1635} \end{equation}
Hence in some computations we may use the random vector in (\ref{eq_1635}) 
as a substitute for $X_r$.  The first and second moments of $X_r$ are given by 
\begin{equation} \EE X_r = 0 \qquad \text{and} \qquad\cov(X_r) = r^2 \cdot \id + r(1-r) \cdot \id = r \cdot \id. 
\label{eq_1641} \end{equation}
Abbreviate $$ v_r = v_r(X_r). $$
We compute that with the change of variables in (\ref{eq_1359}),
\begin{align}
v_r = v_r(X_r)  = 
v_r((1 - r) \theta_{t}) = (1+t) a(t, \theta_t) - \theta_{t} = (1+t) a_t - \theta_t. \label{eq_2252}
\end{align}

\medskip For two probability measures $\nu, \eta$ in $\RR^n$ for which $f = d \nu / d \eta$ exists and is differentiable  $\eta$-almost everywhere,
the Fisher information of  $\nu$ relative to $\eta$ is
$$ J(\nu \D \eta) = \int_{\RR^n} \frac{|\nabla f|^2}{f} d \eta = \int_{\RR^n} |\nabla \log f|^2 d \nu . $$
Recall that $\gamma_r$ is the Gaussian measure of mean zero and covariance $r \cdot \id$ in $\RR^n$.
Let us write $\nu_r$ for the probability measure in $\RR^n$ which is the law of $X_r$. 

\begin{lemma} For $r \in (0,1)$,
\begin{equation}  \EE |v_r|^2 = J(\nu_r \D \gamma_r).
\label{eq_1701} 
\end{equation}
Moreover, 
\begin{equation}  \EE |v_r|^2 \leq \frac{4n}{(1-r)^2}. \label{eq_1626} \end{equation}
	\label{lem_515}
\end{lemma}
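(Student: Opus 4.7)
The plan is to first reduce both assertions to a clean formula for $v_r$ in terms of a conditional expectation. Using $t = r/(1-r)$, hence $1+t = 1/(1-r)$ and $\theta_t = X_r/(1-r)$ by (\ref{eq_1636}), equation (\ref{eq_2252}) becomes
$$ v_r = \frac{a_t - X_r}{1-r}. $$
Since $p_{t,\theta}$ is the conditional density of $X$ given $\theta_t = \theta$, and since $X_r$ and $\theta_t$ generate the same $\sigma$-algebra, we have $a_t = \EE[X \mid X_r]$. Thus
$$ v_r = \frac{\EE[X \mid X_r] - X_r}{1-r}. \qquad (\star) $$

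For the identity (\ref{eq_1701}), the strategy is to identify $\EE[X \mid X_r]$ via Tweedie's formula for Gaussian-noise corruption. Using the in-law representation (\ref{eq_1635}) and writing $Y := X_r/r = X + \sigma Z$ with $\sigma^2 = (1-r)/r$, the standard Bayes identity gives $\EE[X\mid Y=y] = y + \sigma^2 \nabla \log p_Y(y)$, and the change of variables $p_Y(y) = r^n \rho_r(ry)$ (where $\rho_r$ is the density of $\nu_r$) yields
$$ \EE[X \mid X_r = x] = \frac{x}{r} + (1-r)\, \nabla \log \rho_r(x). $$
Plugging this into $(\star)$ and using $\nabla \log \gamma_r(x) = -x/r$ collapses the expression to
$$ v_r(x) = \frac{x}{r} + \nabla \log \rho_r(x) = \nabla \log \frac{d \nu_r}{d \gamma_r}(x), $$
from which (\ref{eq_1701}) follows by the definition of Fisher information.

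For the bound (\ref{eq_1626}), I would apply the triangle inequality directly to $(\star)$:
$$ (1-r)^2\, \EE|v_r|^2 \leq 2\, \EE |\EE[X \mid X_r]|^2 + 2\, \EE |X_r|^2. $$
By conditional Jensen, $\EE|\EE[X\mid X_r]|^2 \leq \EE|X|^2 = n$, where the last equality uses that $\mu$ is isotropic. By (\ref{eq_1641}), $\EE|X_r|^2 = rn \leq n$. Combining these and dividing by $(1-r)^2$ gives $\EE|v_r|^2 \leq 4n/(1-r)^2$, as desired.

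I do not expect any serious obstacle: the only nontrivial step is the Tweedie-type identity for $\EE[X\mid X_r]$, which is a standard integration by parts against the Gaussian kernel; the rest is bookkeeping with the change of variables (\ref{eq_1359}) and a one-line second-moment estimate.
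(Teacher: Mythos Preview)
Your proof is correct and follows essentially the same route as the paper. Both arguments identify $v_r$ with the score $\nabla\log(d\nu_r/d\gamma_r)$ via the Gaussian posterior-mean formula (the paper writes this as $a(t,\theta)=y-s\nabla\psi_s(y)$, you call it Tweedie), and for the bound both split $v_r$ into the barycenter piece $a_t$ (controlled by $\EE|a_t|^2\le n$, which you get from conditional Jensen and the paper gets from the variance decomposition) and the remaining piece $X_r$ (resp.\ $\theta_t$), with the same arithmetic yielding $4n/(1-r)^2$.
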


\begin{proof} Let $t >0, \theta \in \RR^n$. Recall from (\ref{eq_1446}) that with $s = 1/t$ and $y = s \theta$,
\begin{equation}  a(t, \theta) = \frac{1}{t} \left( \theta - \nabla \psi_s(y) \right)
= \frac{1}{t} \left( \theta - \nabla \psi_{1/t}(\theta / t) \right), \label{eq_1454} \end{equation}
where $e^{-\psi_s}$ is the density of the random vector $X + \sqrt{s} Z$. Hence, by (\ref{eq_1359}), (\ref{eq_1449}) and (\ref{eq_1454}),
for $x \in \RR^n$ and $0 \leq r < 1$,
\begin{equation} v_r(x) = \frac{1}{r} \left( \frac{x}{1-r} - \nabla \psi_{\frac{1-r}{r}} \left( \frac{x}{r} \right) \right) - \frac{x}{1-r} = \frac{x}{r} - \frac{1}{r} \cdot \nabla \psi_{\frac{1-r}{r}} \left( \frac{x}{r} \right).  \label{eq_1500} \end{equation}
The function $\exp(-\psi_{(1-r)/r)})$ is proportional to the density of the random vector $X_r / r = X + \sqrt{(1-r)/r} Z$.
Hence, the density of the random vector $X_r$ is proportional to the function $$ x \mapsto \exp \left(-\psi_{\frac{1-r}{r}} \left( \frac{x}{r} \right) \right). $$
Thus, if $f = d \nu_r / d \gamma_r$ then we learn from (\ref{eq_1500}) that 
$$ v_r = \nabla \log f, $$
where we recall that $\nu_r$ is the law of $X_r$. Therefore,
$$ J(\nu_r \D \gamma_r) = \int_{\RR^n} |\nabla \log f|^2 d \nu_r = \int_{\RR^n} |v_r(x)|^2 d \nu_r(x) = \EE |v_r(X_r)|^2 = \EE |v_r|^2, $$
completing the proof of (\ref{eq_1701}). For the ``Moreover'' part, note that by (\ref{eq_2252}),
$$  \EE |v_r|^2 = \EE \left| (1+t) a_t - \theta_t \right|^2 \leq 2 (1+t)^2 \EE |a_t|^2 + 2 \EE |\theta_t|^2. $$
Since $\mu$ is isotropic, by (\ref{eq_2309}) we know that $\EE |a_t|^2 \leq n$. Additionally, $\EE |\theta_t|^2 = n (t + t^2)$. Hence,
$$  \EE |v_r|^2 \leq 2 (1+t)^2 n + 2 n (t + t^2) \leq 4 n (1 + t)^2 = \frac{4n}{(1-r)^2}. $$
\end{proof}

For two probability measures $\nu, \eta$ in $\RR^n$ for which $f = d \nu / d \eta$ exists, we write
\begin{equation}  D( \nu \D \eta ) = \int_{\RR^n} f \log f d \eta = \int_{\RR^n} (\log f) d \nu \in [0, +\infty] \label{eq_1614} \end{equation}
for the Kullback-Leibler divergence, also known as the relative entropy. 
The de Bruijn identity (see the variant in Klartag and Ordentlich  \cite[Proposition 1.5]{KO}) states that if $D(\nu \D \eta) < + \infty$ 
and $\int |x|^4 d \eta(x) < \infty$, then
for all $s > 0$,
\begin{equation}
\frac{d}{ds} D( \nu * \gamma_s \D \eta * \gamma_s ) = -\frac{1}{2} \cdot J( \nu * \gamma_s \D \eta * \gamma_s ),
\label{eq_1534} 
\end{equation}
and moreover, the expression on the right-hand side of (\ref{eq_1534}) is locally integrable in $s \in (0, \infty)$. 
In other words, the Kullback-Leibler divergence decays under Gaussian convolution, and the rate of decay is governed 
by the Fisher information. 
 Assuming  that $D(\nu * \gamma_s \D \eta * \gamma_s) \longrightarrow 0$ as $s \rightarrow \infty$
while $D(\nu * \gamma_s \D \eta * \gamma_s) \longrightarrow D(\nu \D \eta)$ as $s \rightarrow 0$,  we have
\begin{equation}  D(\nu \D \eta)  =\frac{1}{2} \int_0^{\infty} J( \nu * \gamma_s \D \eta * \gamma_s ) ds. \label{eq_1546} \end{equation}
When $U$ and $V$ are random vectors with laws $\nu$ and $\eta$ respectively, we write $D(U \D V) = D(\nu \D \eta)$
and $J(U \D V) = J(\nu \D \eta)$. Recall that $\gamma_1$ is the standard Gaussian probability measure in $\RR^n$.

\begin{corollary} With the above notation,
\begin{equation}  D(\mu \D \gamma_1) = \frac{1}{2}  \int_0^1 J(\nu_r \D \gamma_r) dr
= \frac{1}{2}  \int_0^1 \EE |v_r|^2  dr. \label{eq_1754} 
\end{equation}
Additionally, since $\mu$ is isotropic,
\begin{equation} D(\mu \D \gamma_1) = -\Ent(\mu) + \frac{n}{2} \log(2 \pi e). \label{eq_1642} \end{equation}
\label{cor_1757}
\end{corollary}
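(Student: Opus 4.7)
The plan for the identity (\ref{eq_1754}) is to start from the integrated de Bruijn formula (\ref{eq_1546}) applied to $\nu = \mu$ and $\eta = \gamma_1$, which reads
\[
D(\mu \D \gamma_1) = \frac{1}{2}\int_0^\infty J(\mu * \gamma_s \D \gamma_{1+s})\, ds,
\]
and then to convert this into an integral over $r \in [0,1)$ via the change of variables $r = 1/(1+s)$ that matches the F\"ollmer normalisation (\ref{eq_1359}). The bridge between the two sides is the scaling identity $J((T_\lambda)_*\nu \D (T_\lambda)_*\eta) = \lambda^{-2} J(\nu \D \eta)$ for Fisher information under the dilation $T_\lambda(x) = \lambda x$. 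Since $X_r = r(X + \sqrt{(1-r)/r}\,Z)$, we have $\nu_r = (T_r)_*(\mu * \gamma_{(1-r)/r})$ and $\gamma_r = (T_r)_* \gamma_{1/r}$, so $J(\mu * \gamma_{(1-r)/r} \D \gamma_{1/r}) = r^2\, J(\nu_r \D \gamma_r)$. With $s = (1-r)/r$ one computes $ds = -dr/r^2$, so the factor $r^2$ cancels the Jacobian and the integral collapses to $\tfrac{1}{2}\int_0^1 J(\nu_r \D \gamma_r)\, dr$. The second equality in (\ref{eq_1754}) is then just Lemma \ref{lem_515}.

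The main technical point I expect to encounter is verifying the two hypotheses under which (\ref{eq_1546}) is stated in \cite{KO}. As $s \to 0$, the convergence $D(\mu * \gamma_s \D \gamma_{1+s}) \to D(\mu \D \gamma_1)$ is a standard approximation fact for log-concave measures with finite second moments. As $s \to \infty$, one needs $D(\mu * \gamma_s \D \gamma_{1+s}) \to 0$; I would obtain this via the Shannon--Stam entropy power inequality. By the scale invariance of relative entropy, this is equivalent to showing that the law of $(X + \sqrt{s}\,Z)/\sqrt{1+s}$ tends to $\gamma_1$ in relative entropy, and since that rescaled variable again has identity covariance (using that $\mu$ is isotropic), Shannon--Stam forces $\Ent((X + \sqrt{s}\,Z)/\sqrt{1+s}) \to \tfrac{n}{2}\log(2\pi e)$, which yields the desired $D \to 0$.

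The identity (\ref{eq_1642}) is a direct computation from the definition (\ref{eq_1614}): substituting $\log(d\gamma_1/dx) = -\tfrac{n}{2}\log(2\pi) - |x|^2/2$ gives
\[
D(\mu \D \gamma_1) = -\Ent(\mu) + \frac{n}{2}\log(2\pi) + \frac{1}{2}\int_{\RR^n} |x|^2 \, d\mu(x),
\]
and the isotropy of $\mu$ yields $\int |x|^2 \, d\mu = \tr[\cov(\mu)] = n$, so the last term contributes $n/2$ and the identity follows.
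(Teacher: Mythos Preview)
Your proposal is correct and follows essentially the same route as the paper: both apply the integrated de~Bruijn identity (\ref{eq_1546}) with $\nu=\mu$, $\eta=\gamma_1$, perform the change of variables $s=(1-r)/r$, use the dilation scaling $J(T_\lambda{}_*\nu \D T_\lambda{}_*\eta)=\lambda^{-2}J(\nu \D \eta)$ to identify the integrand with $J(\nu_r \D \gamma_r)$, invoke Lemma~\ref{lem_515} for the second equality, and compute (\ref{eq_1642}) directly from the Gaussian density and isotropy. The only cosmetic difference is in checking the boundary condition at $s\to\infty$: the paper appeals to dominated convergence for $D(Z+\eps X \D Z+\eps Z')\to 0$, whereas you reach the same conclusion via Shannon--Stam and the identity-covariance observation.
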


\begin{proof} Since $X$ has law $\mu$ and $Z$ has law $\gamma_1$, 
$$ D(X \D Z) = D(\mu \D \gamma_1) \qquad \text{and} \qquad J(X \D Z) = J(\mu \D \gamma_1). $$
Note that $D(\alpha X \D \alpha Z) = D(X \D Z)$ while $J(\alpha X \D \alpha Z) = \alpha^{-2} \cdot J(X \D Z)$ for any $\alpha > 0$.
With $s = (1-r) / r$ we observe that 
\begin{align}  \nonumber J(X + \sqrt{s} Z \D \sqrt{s+1} Z) 
& = J(X + \sqrt{(1-r)/r} Z \D Z/\sqrt{r})  \\ & 
= r^2 \cdot J(r X + \sqrt{r(1-r)} Z \D \sqrt{r} Z)  = r^2 \cdot J(\nu_r \D \gamma_r). 
\label{eq_1627} \end{align} 
From the de Bruijn identity (\ref{eq_1546}),
\begin{equation}  D(\mu \D \gamma_1) = \frac{1}{2} \int_0^{\infty} J( \mu * \gamma_s \D \gamma_1 * \gamma_s) ds = \frac{1}{2} \int_0^{\infty} J( X + \sqrt{s} Z \D \sqrt{s+1} Z) ds. \label{eq_1547} \end{equation}
Indeed, equation (\ref{eq_1547}) holds true since it is straightforward to verify, using the dominated convergence theorem, 
that $D(Z + \eps X \D Z + \eps Z')$
tends to zero as $\eps \rightarrow 0$ while $D(X + \eps Z' \D Z + \eps Z')$ tends to $D(X \D Z)$ as $\eps \rightarrow 0$.
Here $Z'$ is a standard Gaussian random vector in $\RR^n$, independent of $X$ and $Z$.
Changing variables $s = (1-r) / r$ in the integral in (\ref{eq_1547}) and using (\ref{eq_1627}) we obtain
$$ D(\mu \D \gamma_1) = \frac{1}{2}  \int_0^{\infty} J( X + \sqrt{s} Z \D \sqrt{s+1} Z) \frac{dr}{r^2} = 
\frac{1}{2} \int_0^1 J(\nu_r \D \gamma_r) dr.$$
This proves the first equality in (\ref{eq_1754}), with the second equality being the content of 
Lemma~\ref{lem_515}. In order to prove the ``Moreover'' part, we note that, with $$ \gamma(x) = (2 \pi)^{-n/2} \exp(-|x|^2/2) $$ being the standard Gaussian density 
in $\RR^n$, and with $\rho$ being the density of $\mu$ with respect to the Lebesgue measure in $\RR^n$,
\begin{align*}
D(\mu \D \gamma_1) & = \int_{\RR^n} \log \frac{\rho}{\gamma} d \mu = \int_{\RR^n} (\log \rho) d \mu + \frac{n}{2} \log(2 \pi) + \int_{\RR^n} \frac{|x|^2}{2} d \mu(x)
\\ & = -\Ent(\mu) + \frac{n}{2} \log(2 \pi) + \frac{n}{2}. 
\end{align*}
\end{proof}

 We use the change of variables  (\ref{eq_1359}) once more and set 
\begin{equation}  \Gamma_r(x) = (1 + t)  A(t, \theta) \geq 0 \qquad \qquad \qquad (0 \leq r < 1, x \in \RR^n). \label{eq_1623} \end{equation}
Abbreviate $ \Gamma_r = \Gamma_r(X_r) $ so that by  (\ref{eq_1636}) and (\ref{eq_1623}),
\begin{equation}  \Gamma_r =  \Gamma_r(X_r) = (1+t) A(t, X_r/(1-r) ) = (1+t) A(t, \theta_t) = (1+t) A_t, \label{eq_2018}
\end{equation}
where $(A_t)_{t \geq 0}$ is the covariance process associated with  stochastic localization starting from~$\mu$. 
The following lemma summarizes the basic properties of the matrix process $(\Gamma_r)_{0 \leq r < 1}$.

\begin{lemma} Let $t \geq 0$ and  $r = t / (t+1)$. Then the following hold:
\begin{enumerate}
\item[(i)] $\displaystyle (1-r)  \EE \Gamma_r = \EE A_t$.
\item[(ii)] $\displaystyle \EE v_r \otimes v_r = \frac{\id - \EE \Gamma_r}{1-r}$, and consequently $0 \leq \EE \Gamma_r \leq \id$.
\item[(iii)] $\displaystyle  \frac{d}{dr} \EE v_r \otimes v_r =  \frac{\EE(\id - \Gamma_r)^2}{(1-r)^2} $.
\item[(iv)] $\displaystyle \frac{d}{dr} \EE \Gamma_r = \frac{\EE \Gamma_r - \EE \Gamma_r^2}{1-r}$.
\item[(v)] Almost surely, $\displaystyle \Gamma_r \leq \frac{1}{r} \cdot \id$.
\end{enumerate}
\label{lem_1709}
\end{lemma}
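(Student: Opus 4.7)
The plan is to handle the five items in an order that exploits the dependencies between them, beginning with the algebraic identities and ending with the two derivative formulas.

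Items (i) and (v) are essentially immediate from the defining identity $\Gamma_r = (1+t) A_t$ in (\ref{eq_2018}) together with the change of variables $1-r = 1/(1+t)$. For (i) I would multiply both sides of (\ref{eq_2018}) by $1-r$ to get $(1-r)\Gamma_r = A_t$ pointwise, and then take expectations. For (v) I would use the Lichnerowicz bound (\ref{eq_1711}), $A_t \leq (1/t)\id$ almost surely, so that $\Gamma_r = (1+t) A_t \leq (1+t)/t \cdot \id = (1/r)\id$.

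The main computation is (ii), which I expect to be the main obstacle. The plan is to expand $v_r = (1+t) a_t - \theta_t$ from (\ref{eq_2252}) and compute
\[
\EE v_r \otimes v_r = (1+t)^2 \EE a_t \otimes a_t - (1+t)\EE(a_t \otimes \theta_t + \theta_t \otimes a_t) + \EE \theta_t \otimes \theta_t.
\]
The cross term is handled by Lemma \ref{lem_1213}, which gives $\EE a_t \otimes \theta_t = \EE \theta_t \otimes \theta_t / (1+t)$. The second moment of $\theta_t$ is computed using that $\theta_t$ has the same law as $tX + W_t$ with $X$ isotropic, centered, and independent of the Brownian motion, yielding $\EE \theta_t \otimes \theta_t = (t^2 + t)\id = t(1+t)\id$. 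The diagonal $\EE a_t \otimes a_t$ term is controlled by the decomposition of variance (\ref{eq_2309}), $\EE a_t \otimes a_t = \id - \EE A_t$. Collecting terms and using $\EE \Gamma_r = (1+t) \EE A_t$ from (i), everything factors as $(1+t)(\id - \EE \Gamma_r) = (\id - \EE \Gamma_r)/(1-r)$, which is the desired identity. The bookkeeping is the only delicate point; positivity of $\id - \EE \Gamma_r$ follows for free because the left-hand side is a covariance matrix.

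Having (i) and (ii), items (iv) and (iii) are obtained by differentiation. For (iv) I would apply $d/dr$ to the identity $\EE A_t = (1-r) \EE \Gamma_r$ from (i), using $dt/dr = 1/(1-r)^2$ and the formula $d/dt \, \EE A_t = -\EE A_t^2$ from (\ref{eq_1645}). Substituting $A_t = (1-r)\Gamma_r$ so that $\EE A_t^2 = (1-r)^2 \EE \Gamma_r^2$, and rearranging yields $\frac{d}{dr} \EE \Gamma_r = (\EE \Gamma_r - \EE \Gamma_r^2)/(1-r)$. For (iii) I would differentiate the identity $(1-r) \EE v_r \otimes v_r = \id - \EE \Gamma_r$ from (ii), substitute the expression from (iv) for $d/dr \, \EE \Gamma_r$, and then replace $\EE v_r \otimes v_r$ by $(\id - \EE \Gamma_r)/(1-r)$ once more. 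The resulting numerator simplifies as $\id - 2 \EE \Gamma_r + \EE \Gamma_r^2 = \EE(\id - \Gamma_r)^2$, which is the claimed formula.
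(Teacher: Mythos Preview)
Your proposal is correct and follows essentially the same route as the paper's proof: items (i), (iv), (iii), and (v) are handled identically, and your computation for (ii) uses the same three ingredients (Lemma~\ref{lem_1213}, the variance decomposition~(\ref{eq_2309}), and the second moment of $\theta_t$). The only cosmetic difference is that you expand $\EE v_r \otimes v_r$ directly from $v_r = (1+t)a_t - \theta_t$, whereas the paper instead writes $a_t = (1-r)v_r + X_r$, first shows $\EE v_r \otimes X_r = 0$ via Lemma~\ref{lem_1213}, and then expands $\EE a_t \otimes a_t$; the two computations are algebraically equivalent.
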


\begin{proof} We obtain (i) by taking the expectation of (\ref{eq_2018}). Next, from (i) and (\ref{eq_2309}),
\begin{equation}  (1-r) \EE \Gamma_r + \EE a_t \otimes a_t = \id. \label{eq_1643} \end{equation}
From  (\ref{eq_1636}) and (\ref{eq_2252}), and by using Lemma \ref{lem_1213},
\begin{equation}  \EE v_r \otimes X_r = (1+t) (1-r) \cdot \EE \left[ a_t - \frac{\theta_t}{t+1} \right] \otimes \theta_t = 0. 
\label{eq_1516} \end{equation}
Recall from (\ref{eq_1641}) that $\EE X_r \otimes X_r = r \cdot \id$
and from (\ref{eq_1636})  that  $a_t = (1-r) v_r + X_r$. It thus follows from (\ref{eq_1516}) 
that 
\begin{align} \nonumber
\EE a_t \otimes a_t & =  \EE \left[ ((1-r) v_r + X_r) \otimes ((1-r) v_r + X_r) \right] 
 \\ & = (1-r)^2 \EE v_r \otimes v_r + \EE X_r \otimes X_r = (1-r)^2 \EE v_r \otimes v_r + r \cdot \id.
 \label{eq_1639}
 \end{align}
Now (ii) follows from (\ref{eq_1643}) and (\ref{eq_1639}). Next, by (i),
\begin{equation}  \frac{d}{dr} \EE \Gamma_r = \frac{d}{dr}  \frac{\EE A_t}{1-r} = 
\frac{\EE A_t}{(1-r)^2} + \frac{1}{1-r} \frac{d}{dr} \EE A_t = \frac{\EE \Gamma_r}{1-r} + \frac{1}{(1-r)^3} \frac{d}{dt} \EE A_t^2, \label{eq_1624} \end{equation}
where we used $dr / dt = (1-r)^2$ in the last passage. Thus, by (\ref{eq_1645}) and (\ref{eq_1624}),
$$    \frac{d}{dr} \EE \Gamma_r   = \frac{\EE \Gamma_r}{1-r} - \frac{1}{(1-r)^3} \EE A_t^2 =
\frac{\EE \Gamma_r}{1-r} - \frac{1}{1-r}  \EE \Gamma_r^2, $$
proving (iv). Item (iii) follows from (ii) and (iv) by a straightforward computation.
In order to obtain (v), we use (\ref{eq_1711}) and (\ref{eq_2018}) which give
$$ \Gamma_r = (1 + t)  A_t  \leq \frac{1+t}{t} \cdot \id = \frac{1}{r} \cdot \id. $$
\end{proof}

For two probability measures $\mu^{(1)}$ and $\mu^{(2)}$ in $\RR^n$ for which $D(\mu^{(i)} \D \gamma_1) < \infty$ for $i=1,2$ we write
\begin{equation}  \deltaEPI( \mu^{(1)}, \mu^{(2)} ) = \frac{D( X^{(1)} \D Z) + D( X^{(2)} \D Z)}{2} - D \left( \frac{X^{(1)} + X^{(2)}}{\sqrt{2}} \D Z \right) 
\label{eq_2100} \end{equation}
where $X^{(1)}$ and $X^{(2)}$ are two independent random vectors in $\RR^n$, with laws $\mu^{(1)}$ and $\mu^{(2)}$ respectively, 
and where $Z$ is a standard Gaussian random vector in $\RR^n$. The Shannon-Stam inequality  implies that $\deltaEPI( \mu^{(1)}, \mu^{(2)} )$ is non-negative,
with quantitative stability estimates in terms of relative entropy by Eldan and Mikulincer \cite{EM}. 
We consider the case where
$ \mu^{(1)} = \mu^{(2)}$ and abbreviate
$$ \deltaEPI( \mu ) = \deltaEPI( \mu, \mu ). $$
 In this particular case, Lemma 2 from  \cite{EM}
with $\lambda = 1/2$ yields the following:

\begin{lemma}[Eldan and Mikulincer] Let $(\Gamma_r^{(1)})_{0 \leq r < 1}$ and $(\Gamma_r^{(2)})_{0 \leq r < 1}$ be two independent copies 
of the matrix process $(\Gamma_r)_{0 \leq r < 1}$. Then,
$$ \deltaEPI( \mu) \geq \int_0^1 \frac{\EE  \Tr \left[ \left(\Gamma_r^{(1)} - \Gamma_r^{(2)} \right)^2 
\left( \sqrt{ ( (\Gamma_r^{(1)})^2 + (\Gamma_r^{(2)})^2 ) / 2 } + (\Gamma_r^{(1)} + \Gamma_r^{(2)} ) / 2  \right)^{-1}
\right] }{4 (1-r) } dr. $$
Consequently, suppose that $\xi \in (0,1)$ and $\eps > 0$ are fixed numbers such that $\Gamma_r \leq \eps^{-1} \cdot \id$ almost surely for all $r \in (\xi, 1)$. Then,
$$ \deltaEPI( \mu ) \geq \eps \cdot \int_\xi^1 \frac{\EE   \left|\Gamma_r^{(1)} - \Gamma_r^{(2)} \right|^2}{8 (1-r) } dr
= \eps \cdot \int_\xi^1 \frac{\EE   \left|\Gamma_r - \EE \Gamma_r \right|^2}{4 (1-r) } dr. $$
\label{lem_EM}
\end{lemma}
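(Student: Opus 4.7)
The plan is to reduce $\deltaEPI(\mu)$ to a time integral of a conditional variance along the F\"ollmer process, then identify that variance with the stated matrix trace. The starting point is Corollary \ref{cor_1757}, which applies to each of the three measures in (\ref{eq_2100}): both copies of $\mu$, and the sum $(X^{(1)}+X^{(2)})/\sqrt{2}$, which is again isotropic and log-concave. Its F\"ollmer interpolant coincides in law with $Y_r := (X_r^{(1)}+X_r^{(2)})/\sqrt{2}$, because $Y_r = r(X^{(1)}+X^{(2)})/\sqrt{2} + \sqrt{r(1-r)}\,(Z^{(1)}+Z^{(2)})/\sqrt{2}$ has exactly the interpolation structure of (\ref{eq_1635}). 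Hence $D\bigl((X^{(1)}+X^{(2)})/\sqrt{2} \D Z\bigr) = \tfrac{1}{2} \int_0^1 \EE |\bar v_r|^2\, dr$, where $\bar v_r$ is the F\"ollmer drift of $Y_r$, which by a standard filtering identity equals $\EE\bigl[ (v_r^{(1)}+v_r^{(2)})/\sqrt{2} \bigm| \mathcal F_r^Y \bigr]$.

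Using that $v_r^{(i)}$ has mean zero (since $\mu$ is centered) and that $v_r^{(1)}, v_r^{(2)}$ are independent, we obtain $\EE\, v_r^{(1)} \cdot v_r^{(2)} = 0$, and then a direct expansion combined with orthogonality of conditional expectation gives
$$ \deltaEPI(\mu) = \frac{1}{2} \int_0^1 \EE \left| \frac{v_r^{(1)}+v_r^{(2)}}{\sqrt{2}} - \bar v_r \right|^2 dr = \frac{1}{2} \int_0^1 \EE\, \var\!\left[ \frac{v_r^{(1)}+v_r^{(2)}}{\sqrt{2}} \,\Big|\, \mathcal F_r^Y \right] dr. $$
The next step, which I expect to be the main obstacle, is to bound this conditional variance from below by the matrix trace in the lemma. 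The plan is to fix $r$, analyze the conditional law of $(X_r^{(1)}, X_r^{(2)})$ given $Y_r$ via Bayes' rule, and use $v_r^{(i)} = \nabla \log(f_r^{(i)}/\gamma_r)$ together with the differential identities in Lemma \ref{lem_1709}(ii)--(iv) (which relate $\nabla v_r^{(i)}$ to $\Gamma_r^{(i)}$) to produce a second-order expansion. A matrix-midpoint interpolation --- the $\lambda=1/2$ case of the Eldan--Mikulincer entropy-interpolation argument --- is then what naturally generates the symmetric weight $\sqrt{((\Gamma_r^{(1)})^2 + (\Gamma_r^{(2)})^2)/2}$ in the denominator. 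Carrying this Itô/Bayes calculation through in a self-contained way is the technical heart of the proof.

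For the ``Consequently'' part, assume $\Gamma_r^{(i)} \leq \eps^{-1}\id$ almost surely for $r \in (\xi,1)$. Since $\eps^{-1}\id$ is scalar and thus commutes with $\Gamma_r^{(i)}$, squaring the eigenvalues yields $(\Gamma_r^{(i)})^2 \leq \eps^{-2}\id$, and operator monotonicity of the square root then gives $\sqrt{((\Gamma_r^{(1)})^2+(\Gamma_r^{(2)})^2)/2} \leq \eps^{-1}\id$. Combined with $(\Gamma_r^{(1)}+\Gamma_r^{(2)})/2 \leq \eps^{-1}\id$, the matrix $M_r$ inside the square brackets of the main integrand satisfies $M_r \leq 2\eps^{-1}\id$, so $M_r^{-1} \geq (\eps/2)\id$. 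Cyclicity of the trace then yields
$$ \Tr\!\left[ (\Gamma_r^{(1)}-\Gamma_r^{(2)})^2\, M_r^{-1} \right] \geq \frac{\eps}{2}\, \bigl| \Gamma_r^{(1)}-\Gamma_r^{(2)} \bigr|^2, $$
which after division by $4(1-r)$ produces the first inequality. The final equality uses independence of $\Gamma_r^{(1)}$ and $\Gamma_r^{(2)}$ together with their common distribution: $\EE\, |\Gamma_r^{(1)}-\Gamma_r^{(2)}|^2 = 2\,\EE\, |\Gamma_r - \EE \Gamma_r|^2$, which turns the factor $1/8$ into $1/4$.
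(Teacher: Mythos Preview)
The paper does not prove this lemma; it is quoted as a black box from Eldan--Mikulincer \cite{EM} (their Lemma~2 with $\lambda=1/2$), so there is no in-paper proof to compare against for the main inequality.

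Your setup is sound: the identity $\bar v_r(y)=\EE\bigl[(v_r^{(1)}+v_r^{(2)})/\sqrt{2}\mid Y_r=y\bigr]$ does hold (it follows by differentiating the convolution density and using $v_r=\nabla\log(d\nu_r/d\gamma_r)$), and together with independence and Pythagoras it yields the exact formula
\[
\deltaEPI(\mu)=\frac12\int_0^1 \EE\,\mathrm{Var}\!\left[\frac{v_r^{(1)}+v_r^{(2)}}{\sqrt 2}\,\Big|\,Y_r\right]dr.
\]
However, you explicitly stop there and describe the passage to the matrix-weighted trace as ``the plan'' and ``the technical heart of the proof''. That passage \emph{is} the lemma; without it you have only restated $\deltaEPI$ in a convenient form. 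Moreover, your proposed route (lower-bounding a conditional variance via a second-order Bayes expansion) is not how \cite{EM} actually proceed: they use the F\"ollmer variational characterization $D(\nu\Vert\gamma_1)=\min_u \tfrac12\int\EE|u_r|^2\,dr$ and construct an explicit suboptimal drift for the sum, of the form $M_r^{(1)}v_r^{(1)}+M_r^{(2)}v_r^{(2)}$ with matrix weights satisfying $(M_r^{(1)})^2+(M_r^{(2)})^2=\id$, chosen via an It\^o computation so that the cross terms telescope. The square root $\sqrt{((\Gamma_r^{(1)})^2+(\Gamma_r^{(2)})^2)/2}$ is an artifact of that specific construction, not a natural output of a conditional-variance bound; it is unclear that your route produces this particular denominator.

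Your derivation of the ``Consequently'' part is correct: $\Gamma_r^{(i)}\le\eps^{-1}\id$ gives $M_r\le 2\eps^{-1}\id$, hence $M_r^{-1}\ge(\eps/2)\id$, and since $(\Gamma_r^{(1)}-\Gamma_r^{(2)})^2$ is positive semidefinite one has $\Tr[(\Gamma_r^{(1)}-\Gamma_r^{(2)})^2 M_r^{-1}]\ge(\eps/2)\,|\Gamma_r^{(1)}-\Gamma_r^{(2)}|^2$ by the standard inequality $\Tr[AB]\ge 0$ for $A,B\ge 0$. The final equality via $\EE|\Gamma_r^{(1)}-\Gamma_r^{(2)}|^2=2\,\EE|\Gamma_r-\EE\Gamma_r|^2$ is also correct.
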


Ball and Nguyen \cite{BN} showed that the quantity 
$\deltaEPI(\mu)$ exhibits a particularly simple behavior in the log-concave case.

\begin{lemma}[Ball and Nguyen] Assume that the log-concave probability measure $\mu$ in $\RR^n$ is isotropic. Then,
$$ \deltaEPI( \mu ) \leq 2 n. $$
\label{lem_BN}
\end{lemma}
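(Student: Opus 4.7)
The plan is to recast $\deltaEPI(\mu)$ as a difference of differential entropies, reduce it to a conditional entropy via an orthogonal change of variables, and then apply (\ref{eq_1048}) together with Gibbs' inequality to close the estimate. The bound this approach yields is in fact $(n/2)\log 2$, considerably sharper than the stated $2n$.

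Let $X_1, X_2$ be independent with law $\mu$ and set $Y = (X_1+X_2)/\sqrt{2}$ and $D = (X_1-X_2)/\sqrt{2}$. Since $\cov(\mu_Y) = \id$ (by independence) and $\mu_Y$ is log-concave (Pr\'ekopa-Leindler), both $\mu$ and $\mu_Y$ are isotropic log-concave, so applying (\ref{eq_1642}) to each yields $\deltaEPI(\mu) = \Ent(\mu_Y) - \Ent(\mu) = h(Y) - h(X_1)$, where $h$ denotes differential entropy. Moreover $(X_1, X_2) \mapsto (Y, D)$ is an orthogonal map of $\RR^{2n}$, so $h(Y, D) = h(X_1, X_2) = 2 h(X_1)$ by independence, whence $h(D \mid Y) = 2 h(X_1) - h(Y)$ and
$$\deltaEPI(\mu) = h(X_1) - h(D \mid Y).$$

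The core step is lower-bounding $h(D \mid Y)$. For each $y \in \RR^n$, the conditional density
$$p_{D \mid Y=y}(d) \propto \rho\!\left(\tfrac{y+d}{\sqrt{2}}\right) \rho\!\left(\tfrac{y-d}{\sqrt{2}}\right)$$
is log-concave in $d$, and the iid swap $(X_1, X_2) \stackrel{d}{=} (X_2, X_1)$, which fixes $Y$ and sends $D \mapsto -D$, makes it symmetric around $0$; hence it is centered log-concave, and (\ref{eq_1048}) gives $h(D \mid Y = y) \geq -\log p_{D \mid Y=y}(0) = -2\log\rho(y/\sqrt{2}) + \log\rho_Y(y)$. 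Averaging over $y \sim \rho_Y$ and using $\EE[\log\rho_Y(Y)] = -h(Y)$,
$$h(D \mid Y) \geq -2\,\EE[\log\rho(Y/\sqrt{2})] - h(Y).$$
To control $\EE[\log\rho(Y/\sqrt{2})]$, Gibbs' inequality $D(p_{Y/\sqrt{2}} \D \rho) \geq 0$ yields $\EE[\log\rho(Y/\sqrt{2})] \leq -h(Y/\sqrt{2}) = -h(Y) + (n/2)\log 2$, using the scaling identity $h(Y/\sqrt{2}) = h(Y) - (n/2)\log 2$. Substituting gives $h(D \mid Y) \geq h(Y) - n\log 2$, and hence
$$\deltaEPI(\mu) = h(X_1) - h(D \mid Y) \leq -\deltaEPI(\mu) + n\log 2,$$
so $\deltaEPI(\mu) \leq (n/2)\log 2 < 2n$.

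The subtle point is verifying the hypotheses of (\ref{eq_1048}) for the conditional law of $D$ given $Y = y$: log-concavity is immediate from the product structure of the conditional density, but the \emph{centered at $0$} hypothesis (needed to apply (\ref{eq_1048}) at $d = 0$) relies crucially on the iid swap symmetry, and would fail if one conditioned some other linear combination of $X_1, X_2$ on $Y$.
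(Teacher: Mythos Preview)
Your proof is correct and yields the sharper bound $\deltaEPI(\mu) \leq \tfrac{n}{2}\log 2$. It differs from the paper's argument in that the paper simply invokes a black-box inequality from Ball--Nguyen, namely $\Ent\big((X^{(1)}+X^{(2)})/\sqrt{2}\big) \leq -\log f(0) + 2n$, and combines it with the lower bound $\Ent(\mu) \geq -\log f(0)$ from (\ref{eq_1048}); this two-line argument gives $2n$ directly. Your route is self-contained: you unfold the rotation trick $(X_1,X_2)\mapsto(Y,D)$, apply (\ref{eq_1048}) pointwise to the centered log-concave conditional law of $D$ given $Y=y$ (the swap symmetry being exactly what makes this conditional centered), and then close with Gibbs' inequality against $\rho$. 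This is essentially the mechanism underlying Ball--Nguyen's own proof, so you have effectively reproved the cited inequality rather than quoted it, and your final Gibbs step tightens the constant from $2$ to $\tfrac12\log 2$. Both approaches rest on the same structural fact---log-concavity of the conditional given the sum---but yours makes that dependence explicit and extracts more from it.
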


\begin{proof} Write $f$ for the log-concave density of $\mu$. Let $X^{(1)}$ and $X^{(2)}$ be two independent random vectors with law $\mu$. By (\ref{eq_2100}) and 
Corollary \ref{cor_1757},
\begin{align*}  \deltaEPI( \mu ) & = \Ent \left( \frac{X^{(1)} + X^{(2)}}{\sqrt{2}} \right)  - \Ent ( X^{(1)} ).
\end{align*}
As is proved in the third displayed formula after (5.3) in  Ball and Nguyen \cite{BN},
$$ \Ent \left( \frac{X^{(1)} + X^{(2)}}{\sqrt{2}} \right) \leq -\log f(0) + 2n. $$
However, $\Ent ( X^{(1)} ) = \Ent(\mu) \geq -\log f(0)$ by (\ref{eq_1048}), and the lemma follows.
\end{proof}

\section{Proof of Theorem \ref{thm_1744}}
\label{sec_thmproof}

We adapt the argument from Eldan and Mikulincer \cite{EM}.
Assume that $ n \geq 10$ 
and let us apply Proposition \ref{prop_1115}. From the conclusion of the proposition, there exists 
an integer $m \in [n/4,n]$
and an isotropic, log-concave probability measure $\mu$ in $\RR^m$ 
with 
\begin{equation}
L_{\mu} \gtrsim L_n \label{eq_1722}
\end{equation}
and such that for some $t_0 \sim 1$, 
\begin{equation}  \EE A_{t_0} \geq \frac{1}{4} \cdot \id.  \label{eq_1723} \end{equation}
Here, $(A_t)_{t \geq 0}$ is the covariance process associated with stochastic localization starting from the measure $\mu$.
Denote 
$$ \xi = \frac{t_0}{t_0 + 1} $$
so that 
\begin{equation}  c_1 \leq \xi \leq 1 - c_1 \label{eq_1727} \end{equation}
for some universal constant $c_1 > 0$. From (\ref{eq_1723}) and Lemma \ref{lem_1709}(i),
\begin{equation}
 \EE \Gamma_\xi = \frac{1}{1 - \xi} \EE A_{t_0} \geq \frac{1}{4} \cdot \id. 
 \label{eq_1724}
 \end{equation}
Applying Lemma \ref{lem_1709}(iv,v) we see that
$$ \frac{d}{dr} \EE \Gamma_r = \frac{\EE \Gamma_r - \EE \Gamma_r^2}{1-r} \geq \frac{\EE \Gamma_r - \EE \Gamma_r / r}{1-r} = -\frac{\EE \Gamma_r}{r}. $$
Therefore
$$ \frac{d}{dr} (r \cdot \EE \Gamma_r) \geq 0, $$
and $r \cdot \EE \Gamma_r$ is increasing in $r \in [0, 1)$. Consequently, from (\ref{eq_1727}), (\ref{eq_1724}) and Lemma \ref{lem_1709}(ii), for any $r \geq \xi$,
\begin{equation}
\tilde{c} \cdot \id \leq \EE \Gamma_r \leq \id.
\label{eq_1728}
\end{equation}
By Lemma \ref{lem_1709}(ii) and (\ref{eq_1728}), for $r \geq \xi$,
\begin{align} \nonumber 
\frac{|\id - \EE \Gamma_r|^2}{1-r} & = \Tr \left[ \frac{\id - \EE \Gamma_r}{1-r} \cdot (\id - \EE \Gamma_r) \right]
 = \Tr \left[ (\EE v_r \otimes v_r) \cdot (\id - \EE \Gamma_r) \right] \\ & \leq (1 - \tilde{c}) \Tr \left[ \EE v_r \otimes v_r \right] = (1 - \tilde{c}) \EE |v_r|^2,
\label{eq_1725}
\end{align}
where we used the fact that $\tr[AB] \geq 0$ for two symmetric, positive semi-definite matrices $A, B \in \RR^{m \times m}$.
By integration by parts and Lemma \ref{lem_1709}(iii),
\begin{align}   \nonumber \int_{\xi}^1 \EE |v_r|^2 dr & = \int_{\xi}^1 (1-r) \frac{d}{dr} \EE |v_r|^2 dr + (1 - \xi) \EE |v_\xi|^2
\\ & =   \int_{\xi}^1 \frac{\EE | \id - \Gamma_r |^2}{1-r} dr + (1 - \xi) \EE |v_{\xi}|^2.
\label{eq_1050}
\end{align}
By Lemma \ref{lem_1709}(v), almost surely $\Gamma_r \leq r^{-1} \cdot \id \leq \xi^{-1} \cdot \id$ for $r \in (\xi,1)$.
From Lemma \ref{lem_EM} and Lemma \ref{lem_BN} we thus see that 
\begin{equation}  2m \geq \deltaEPI( \mu ) \geq \xi \cdot \int_\xi^1 \frac{\EE   \left|\Gamma_r - \EE \Gamma_r \right|^2}{4 (1-r) } dr. \label{eq_2135} 
\end{equation}
For $r \geq \xi$ we decompose 
\begin{align}   \EE \left|\Gamma_r - \EE \Gamma_r \right|^2  = \EE \left|\id - \Gamma_r \right|^2 - \left|\id - \EE \Gamma_r \right|^2.
\label{eq_2136} \end{align}
Thus, by
(\ref{eq_1725}),  (\ref{eq_1050}), (\ref{eq_2135}) and (\ref{eq_2136}),
\begin{align} \nonumber \frac{8 m}{\xi} & \geq \int_\xi^1 \frac{\EE \left|\Gamma_r - \EE \Gamma_r \right|^2}{1-r} dr =  \int_\xi^1 \frac{\EE \left|\id - \Gamma_r\right|^2}{1-r} dr  - \int_{\xi}^1 \frac{\left|\id - \EE \Gamma_r \right|^2}{1-r} dr \\ \nonumber & \geq \int_\xi^1 \frac{\EE \left|\id - \Gamma_r\right|^2}{1-r} dr - (1 - \tilde{c}) \int_\xi^1 \EE |v_r|^2 dr \\ & =  \int_{\xi}^1 \EE |v_r|^2 dr -  (1 - \xi) \EE |v_{\xi}|^2 -  (1 - \tilde{c}) \int_\xi^1 \EE |v_r|^2 dr.
\label{eq_2147} 
\end{align}
By (\ref{eq_1727}) and (\ref{eq_2147}),
\begin{equation}
\int_{\xi}^1 \EE |v_r|^2 dr \leq \frac{1 - \xi}{\tilde{c}} \cdot \EE |v_{\xi}|^2 + C m. 
\label{eq_2148}
\end{equation}
Since $1 - \xi \geq c_1$, the ``Moreover'' part of Lemma \ref{lem_515} tells us that $\EE |v_{\xi}|^2 \leq \tilde{C} m$. Hence,
from (\ref{eq_2148}),
\begin{equation}
\int_{\xi}^1 \EE |v_r|^2 dr \leq \bar{C} m.
\label{eq_2148_}
\end{equation}
By Lemma \ref{lem_1709}(iii) we know that $r \mapsto \EE |v_r|^2$ is non-decreasing. Hence,
from (\ref{eq_1727}) and (\ref{eq_2148_}),
\begin{equation}  \int_{0}^1 \EE |v_r|^2 dr \leq \frac{1}{1-\xi} \int_{\xi}^1 \EE |v_r|^2 dr \leq C m.
\label{eq_1758} \end{equation}
Thus, from (\ref{eq_1758}) 
and Corollary \ref{cor_1757},
$$ -\Ent(\mu) + \frac{m}{2} \log(2 \pi e) = D(\mu \D \gamma_1) 
= \frac{1}{2} \int_{0}^1 \EE |v_r|^2 dr \leq C m. $$
Consequently,
\begin{equation}  \Ent(\mu) \geq - \tilde{C} m. \label{eq_1707} \end{equation}
Since $\cov(\mu) = \id$, from (\ref{eq_1750}) and (\ref{eq_1707}) we  deduce that 
$$ L_{\mu} \leq \bar{C}. $$
The conclusion of Theorem \ref{thm_1744} thus follows from (\ref{eq_1722}). \hfill \qed

\medskip
\noindent Department of Mathematics,
Weizmann Institute of Science,
Rehovot 76100, Israel. \\
{\it e-mail:} \verb"boaz.klartag@weizmann.ac.il"

\medskip
\noindent Universit\'e de Poitiers, CNRS, LMA, Poitiers, France. \\
{\it e-mail:} \verb"joseph.lehec@univ-poitiers.fr"

\end{document}